\begin{document}

\title{A stochastic three-block splitting algorithm and its application to quantized deep neural networks}

\author{\name Fengmiao Bian \email mafmbian@ust.hk\\
       \addr Department of Mathematics\\
       The Hong Kong University of Science and Technology\\
       Clear Water Bay, Kowloon, Hong Kong, CHINA
       \AND
       \name Ren Liu \email liur0810@sjtu.edu.cn \\
       \addr School of Mathematical Sciences\\ 
       Shanghai Jiao Tong University\\
        Shanghai 200240, CHINA
        \AND
       \name Xiaoqun Zhang \email xqzhang@sjtu.edu.cn \\
       \addr School of Mathematical Sciences, MOE-LSC \\
       and Institute of Natural Sciences\\
       Shanghai Jiao Tong University\\
       Shanghai 200240, CHINA}

\editor{}

\maketitle

\begin{abstract}
Deep neural networks (DNNs) have made great progress in various fields. In particular, the quantized neural network is a promising technique making DNNs compatible on resource-limited devices for  memory and computation saving. In this paper, we mainly consider a non-convex minimization model with three blocks to train quantized DNNs and propose a new stochastic three-block alternating minimization (STAM) algorithm to solve it.  We develop a convergence theory for the STAM algorithm and obtain an $\epsilon$-stationary point with optimal convergence rate $\mathcal{O}(\epsilon^{-4})$. Furthermore, we apply our STAM algorithm to train DNNs with relaxed binary weights. The  experiments are carried out on three different network structures, namely VGG-11, VGG-16 and ResNet-18. These DNNs are trained using two different data sets, CIFAR-10 and CIFAR-100, respectively. We compare our STAM algorithm with  some classical efficient algorithms for training quantized neural networks. The test accuracy indicates the effectiveness of STAM algorithm for training relaxed binary quantization DNNs.
\end{abstract}

\begin{keywords}
Quantized networks, Stochastic three blocks alternating minimization algorithm,  $\epsilon$-stationary point, Relaxed binary networks
\end{keywords}

\section{Introduction}\label{sec:Intro}

\subsection{Background}\label{background}
Deep neural networks (DNNs) have become important tools for complex data learning and achieved great success in many practical fields, such as computer vision, speech recognition, automatic driving and so on \citep{SL, RHGS, KSH, SZ, WWGL}. This great success mainly depends on the flexibility of neural networks and  their complex nonlinear structure. At present, most research  makes neural networks more flexible  by increasing  number  of layers and/or width of neural networks  \citep{GBC, S}. However, these also lead to a large increase of the number of parameters in neural networks. Since most parameters are floating point numbers, one requires a lot of storage space. For example,  the AlexNet Caffemodel is over 200MB, and the VGG-16 Caffemodel is over 500MB \citep{HMD}. It makes these powerful  networks  very difficult to embed into portable devices, such as mobile phones, laptops and so on. 

Therefore, many literature are devoted to compressing DNNs to reduce the storage space of parameters while maintaining the test accuracy of network as much as possible. A common method of compressing neural networks is quantization weights, that is to say, the network is compressed by reducing the bit number of  parameters.  Many research work are devoted to  this direction. For example, in the literature \citep{HMD}, they reduced the parameters of the full connection layer to 5 bits and of the convolution layer to 8 bits, while ensuring almost the same accuracy. In \citep{LCMB, D, LTA}, the parameters in the network pretraining are quantified as 4-12 bits. In \citep{LS}, the authors also proposed two algorithms to quantify each layer of neural network sequentially in a data-dependent manner, and then they trained DNNs on MNIST and CIFA10 datasets with relaxed ternary or 4-bits weights. 

The lowest bit quantization is the 1-bit quantization which is called the binary quantization. For binary quantized DNNs, both weights and activations can be expressed as -1 (0) or 1, thus not taking up too much memory. In addition, by binary quantization, heavy matrix multiplication can be replaced with lightweight bitwise and bit-counting operations. Therefore, binary DNNs achieve significant acceleration during the inference and save memory and power consumption. These clearly make binary DNNs hardware-friendly. Indeed, there have been some works to verify the effectiveness of binary DNNs, such as BNN \citep{HCSYB} and Xnor-Net \citep{RORF}. In particular,  Xnor-Net  with 1-bit convolution layer can achieve 32$\times$ memory saving and 58$\times$ faster convolutional operations. Furthermore, 1-bit quantization DNNs have also been successfully adopted in some practical applications, such as computer Vision \citep{LBH, SZ, HZRS}, Image classification \citep{ZYYH, LWLYLC, GLJLHLYY}, Detection \citep{LWLQYF} and so on.

From another point of view, the binary quantization problem has been considered formulating as a non-convex optimization problem. Thus, many studies focus on optimized binary quantization which minimizes quantization error and improves the loss function. For example, in \citep{CP, RVV}, they used the classical projection stochastic  gradient descent (PSGD) algorithm to train DNNs with binary quantized weights. Further, in \citep{CBD} they introduced BinaryConnect (BC) which is a modification of  the projection stochastic gradient descent algorithm to train a DNN with binary weights, while the experiments showed that the accuracy was significantly improved. In \citep{LDXSSG} the authors studied the BC method from a theoretical perspective and established the convergence of BC under the strongly convex assumptions. In \citep{LLZJ}, the quantitative neural network is regarded as an optimization problem with constraints, and the authors decoupled the continuous variables from the discrete constraints of the neural network based on ADMM. In \citep{YZLOQX}, they proposed a relaxed binary quantization algorithm called BinaryRelax (BR) to better solve non-convex optimization problems with discrete constraints. The algorithm is  a two-stage method with a pseudo-quantization weight constantly close to the quantization weight by increasing regularization parameters at the first stage, while in the second stage the quantized weights are directly adopted.   

\subsection{Problem formulation and motivation}\label{algorithm}
Previous work focused on the algorithm of quantization weight, which all directly minimized the loss function on quantization weight.  The non-convexity of this kind of optimization problem is rather strong and results in many  local minima, which makes the algorithms  get stuck at some ``bad" local optimum.

In this paper, we attempt to explore new formulations that reveal the relationship between quantized  and floating-point weights. It is a natural idea that quantized parameters should approximate the full-precision parameter as closely as possible,  expecting that the performance of the binary neural network model will be close to the full-precision one. Thus, we construct the following new model for training DNNs with quantized weights:
\begin{equation}\label{binary-model}
\min_{W,  \tilde{W}} \frac{\lambda}{2} \| W - \tilde{W} \|_{F}^2 + L_{W} (p, q) + \mathcal{I}_{\mathcal{Q}}(\tilde{W}). 
\end{equation}
Here  $W$ is the float parameters in the neural network and $\tilde{W}$ is the corresponding quantized parameters. $L_{W}(p, q)$ is the loss function of the neural network, $p$ is the input data of the neural network and $q$ is the corresponding label. The function $\mathcal{I}_{\mathcal{Q}}$ denotes the indicator function of the set of quantized weights $\mathcal{Q}$. In the model \eqref{binary-model}, we use the loss function $L_{W}(p, q)$ to find the floating point parameters with good generalization performance. Further,  under the interaction of $\| W - \tilde{W} \|_{F}^2$ and $\mathcal{I}_{\mathcal{Q}}(\tilde{W})$  the quantized parameters would be close enough to the floating parameters. We would design a new stochastic three-block alternating minimization algorithm to solve the problem \eqref{binary-model}. The problem can be formulated in a more general form with three-block composite structure: 
\begin{equation}\label{model}
\min_{x,y} \Phi(x,y) := F(x) + G(y) + H(x,y),
\end{equation}
where $F, H, G$ are proper lower semi-continuous functions. Here we emphasize that the functions $F$, $G$, and $H$ are not necessarily convex. In deep neural networks, the function $G(y)$ is generally a loss function which is a sum of many terms, such as $G(y)=\frac{1}{N} \sum_{i=1}^N G_i (y)$ with $N$ being large. 

\subsection{The proposed algorithm and related work.}
The main idea of our proposed algorithm is to  minimizing variables $x$ and $y$ alternately for solving the optimization problem \eqref{model}. For $y$-direction, as the function $G$ has a large-sum structure, we consider to linearize the function $G+H$  and utilize stochastic gradient estimators instead of full gradient calculations. For $x$-direction,  the corresponding composite problem is solved using the Douglas-Rachford splitting method. 
Based on the above ideas, we propose a stochastic three-block alternating algorithm to solve the non-convex problem \eqref{model}; see Algorithm \ref{alg:STAM}. {\it Throughout} the paper, we assume that the gradient estimator $\tilde{\nabla} G(y)$  in Algorithm \ref{alg:STAM} is unbiased. More arguments of unbiased gradient estimators can be seen in subsection \ref{UGE01}.   

\begin{algorithm}[htp]
\caption{ A Stochastic Three-block Alternating Minimization (STAM) Algorithm}
\label{alg:STAM}
\begin{algorithmic}
\STATE{{\bf{Step 0.}}  Choose the parameters $\gamma,~\beta >0$ and an initial point $x^{0},~y^{0}.$}

\STATE{{\bf{Step 1.}}  For $t = 0, \dots, T-1$ and set
\begin{subequations}\label{algalg1}
\begin{align}
&y^{t+1} \in \arg\min_{y} \Bigg\{  \langle \tilde{\nabla} G( y^t) + \nabla_y H(x^t, y^t), y - y^t \rangle + \frac{\beta}{2} \| y - y^t \|^2 \Bigg\}, \label{algy}\\
&x^{t+1} \in  \arg\min_{x}  \Bigg\{  H(x, y^{t+1}) + \frac{1}{2\gamma}\|x-z^{t}\|^{2} \Bigg\}, \label{algx}\\
&u^{t+1} \in  \arg\min_{u} \Bigg\{ F(u) + \frac{1}{2\gamma}\| 2x^{t+1} -z^{t} - u  \|^{2} \Bigg\},  \label{algu} \\
&z^{t+1} = z^{t} + (u^{t+1} - x^{t+1}), \label{algz}
\end{align}
\end{subequations}
where $\tilde{\nabla} G(y)$ is a gradient estimator of $\nabla G(y)$.  
}
\vspace{0.2cm}
\STATE{{\bf{Step 2.}}  Output $\{ y^{t+1}, u^{t+1} \} $.}

\end{algorithmic}
\end{algorithm}

We point out that many algorithms have been designed to solve the three-block problem of form \eqref{model}. One of the most famous algorithms is the proximal alternating linearized minimization (PALM) algorithm proposed by \citep{BST} which alternately solves two linearized proximal problems.  In \citep{DY}, a three-operator splitting (called DYS) algorithm is formulated to solve the convex optimization problem of three terms without cross terms. This algorithm is also a generalization of Dogulas-Rachford Splitting algorithm \citep{LM}. See also non-convex Douglous-Rachford algorithm by \citep{LP}. Later, \citep{BZ} established convergence of this three-operator splitting algorithm in non-convex framework. In \citep{BCN}, the authors also proposed a proximal alternating algorithm to solve the three-term problem with linear operator composition. However, all of the algorithms mentioned above are deterministic. As a result, when solving large-scale optimization problems, deterministic algorithm will consume a lot of time, which damps the efficiency of the algorithm.  To avoid such difficulty, many stochastic algorithms for non-convex problem involving three terms have also been proposed.   In \citep{DTLDS}, the authors presented a stochastic proximal alternating linearized minimization (called SPRING) algorithm which combines a class of variance-reduced stochastic gradient estimators. In \citep{YMS}, the authors extended DYS algorithm to a stochastic setting where the unbiased stochastic gradient estimators were considered.  In \citep{MT}, they studied stochastic proximal gradient algorithms to solve the three terms problem as in \citep{BZ, YMS}. They presented a mini-batch stochastic proximal algorithm for the general stochastic problem and incorporated the variance reduced gradient estimator  into proximal algorithm to solve the finite-sum optimization problem. The differences  and connections between these stochastic algorithms and ours are elaborated in the next subsection.

\subsection{Contributions} \label{contributions}

The main contributions of this paper are elaborated as follows.  

\begin{itemize}
\item {\bf Model.}  We present a new model \eqref{binary-model} for training DNNs with quantized weights. Compared with the  quantization models  in the previous literatures \citep{CBD, YZLOQX}, we minimize the distance between the quantized weights and floating point ones, on training floating point parameters in DNNs at the same time. The generalization ability of floating point parameters can be transferred to quantized parameters in a certain extent, as the loss function is continuous and  is verified in the numerical experiments.

\item {\bf Algorithm.} For solving the problem \eqref{model} with three-block structures, we propose a new stochastic alternating algorithm. Compared with the stochastic PALM (or SPRING) \citep{DTLDS}, the SPRING algorithm solves the problem with $H(x,y)$ being a finite sum  in which the full gradients of $H(x,y)$ are approximated using the variance-reduced gradient estimators.  Our algorithm mainly focuses on the function $G$ having large scale structure, thus we approximate the full gradient of G by unbiased stochastic gradient estimator. Moreover, in our algorithm $G$ could be a more general large-scale structure, not necessarily a finite sum structure.

In the papers \citep{YMS, MT},  their algorithms deal with the three-block problems that do not involve the cross term $H(x,y)$.   In addition, \citep{MT}  use the stochastic gradient estimator with reduced variance, while our algorithm use the unbiased gradient estimator. 

\item {\bf Convergence Analysis.} We establish the convergence analysis for our algorithm \ref{alg:STAM}  on the condition that  the stochastic gradient estimator $\tilde{\nabla} G$ meets the Expected Smoothness (ES) inequality presented in a recent work \citep{KR}.   As pointed out in \citep{KR}, the ES inequality is the weakest among the  current conditions modelling the behaviour of the second moment of stochastic gradient (also see the subsection \ref{subsec-ES} below). Compared with other methods for solving binary neural networks \citep{CBD, YZLOQX},  we show that our algorithm converges to an $\epsilon$-stationary point of problem \eqref{model} and has an $\mathcal{O}(\epsilon^{-4})$ convergence rate. However, in \citep{CBD, YZLOQX}, the convergence rate analysis for their algorithms were not provided. In  \citep{LDXSSG}  the convergence of the BC algorithm was established with loss function being strongly convex.

\item {\bf Experiments.} We apply our algorithm to train VGG-11\citep{SZ}, VGG-16 \citep{SZ} and ResNet-18 \citep{HZRS} DNNs on two standard datasets: CIFAR-10 \citep{K} and CIFAR-100 \citep{K} with relaxed binary weights respectively. The experimental results show the effectiveness of our algorithm. In particular, for CIFAR-10 data set and the DNN with VGG-11 structure, our test accuracy are far better than existing quantization DNN methods. 

\end{itemize}

The rest of this paper is organized as follows. We first review unbiased gradient estimator in Section \ref{sec:convergence}. The convergence rate of the STAM Algorithm \ref{alg:STAM} for  non-convex problems is presented in Section \ref{sec:convergence}. In Section \ref{sec:examples}, we carry out some experiments with the STAM algorithm, and the numerical results show that this algorithm is efficient compared to other algorithms for quantized DNNs. In Section \ref{conclusion}, we give some concluding remarks for our algorithm.

\section{Gradient Estimator and Convergence}\label{sec:convergence}

In this section, we first give the definition of unbiased gradient estimator and some corresponding sampling methods. Then we recall the important Expected Smoothness  (ES) assumption on gradient estimators  proposed by \citep{KR} and prove that this ES assumption is naturally valid in $G$ with a special finite-sum structure.  Finally,  we establish the convergence and convergence rate of Algorithm \ref{alg:STAM} based on the ES assumption.

\subsection{Unbiased Gradient Estimator}\label{UGE01} 
\label{unbiased}
In our theoretical analysis and experiments we always assume that the stochastic gradient estimator $\tilde{\nabla} G(y)$ is unbiased, and its definition is given in the following.    
\begin{definition}\label{ass1} The stochastic gradient estimator $\tilde{\nabla} G(y)$ is unbiased if 
\begin{equation}\label{ass-unbiased}
\mathbb{E} \left[ \tilde{\nabla} G(y) \right] = \nabla G(y). 
\end{equation}
\end{definition}

\begin{remark}
In many applications of deep learning, function $G$ generally has the following finite-sum structure 
\begin{equation}\label{eq500}
G(y) =  \frac{1}{N} \sum_{i=1}^{N} G_i (x), 
\end{equation}
such as the empirical risk in supervised machine learning. In this setting,  the source of stochasticity comes from the way of sampling from the sum, and the unbiased stochastic gradient can be written in the following unified form: 
\begin{equation}\label{stochastic G}
\tilde{\nabla} G(y) = \frac{1}{N}\sum_{i =1}^{N} v_i \nabla G_i (y),
\end{equation}
where  $(v_1, \dots, v_N)$ is a sampling vector drawn from certain distribution $\mathcal{D}$.  The random variable $v_i$ has different forms for different sampling methods. Here we give a representative sampling distribution:
\begin{itemize}
\item {\bf $b$-nice sampling without replacement.} This is a well-known method in deep learning, and we also use this sampling method in our experiments.  We generate a random subset $S \subset \{ 1, 2, \dots, N \}$ by uniformly choosing from all subsets of size $b$, where $b \in [1, N]$ is an integer.  Then we define $ v_i = \frac{1_{i \in S}}{p_i}$ with $1_{i \in S} = 1$ for $i \in S$ and $0$ otherwise, and $p_i = \frac{b}{N}$ for all $i$.
\end{itemize}
There are many other sampling methods that make stochastic gradients unbiased, such as sampling with replacement, independent sampling without replacement. We refer to \citep{KR} for more details.   
\end{remark}

\subsection{Second  Moment of Stochastic Gradient}
\label{subsec-ES}

In this subsection, we review the so-called Expected Smoothness (ES) assumption on the second moment of  stochastic gradient proposed recently by \citep{KR}, related to  the work \citep{RT,GRB,GLQSSR} for stochastic gradient descent (SGD) in the convex setting. This (ES) assumption can be applied to non-convex problems and is essential for our convergence analysis.  

Given a function $G$, from now on we use the  notation $G^{\inf}=\inf_{y \in \mathbb{R}^n} G(y)$.   In the following, we give the ES assumption.  
\begin{assumption}[{\bf Expected Smoothness.}]\label{ass3} 
The second moment of the stochastic gradient $\tilde{\nabla} G(y)$ satisfies 
\begin{equation}\label{422}
\mathbb{E} \left[ \| \tilde{\nabla} G(y) \|^2 \right] \leq 2A_0 \left( G(y) - G^{\inf} \right) + B_0 \| \nabla G(y) \|^2 + C_0,
\end{equation}
for some $A_0, B_0, C_0 \geq 0$ and for all $y \in \mathbb{R}^n$. 
\end{assumption}

\begin{remark}\label{rem1}

When analyzing the convergence of stochastic gradient descent (SGD), various assumptions on the second moment of stochastic gradients have been proposed in many literatures. \citep{KR} have shown that  ES is the weakest,  and hence the most general, among all these assumptions, including such as bounded variance (BV) \citep{GL}, maximal strong growth (M-SG) \citep{T,SR},  expected strong growth (E-SG)  \citep{VBS}, relaxed growth (RG) \citep{BCNo}, and gradient confusion (GC) \citep{SDXHG}. We refer to Section 3 in \citep{KR} for more details. 
\end{remark}

In \citep{KR}, the author showed that when $G$ has a finite-sum structure \eqref{eq500}, the ES inequality automatically holds under mild conditions on the functions $G_i$ and the sampling vectors $v_i$.  In our convergence analysis, we require that $\tilde{\nabla} G(y) + \nabla_y H(x,y)$ satisfies the ES assumption. Next, we would also prove that the ES assumption of $\tilde{\nabla} G(y) + \nabla_y H(x,y)$ is automatically satisfied under natural conditions on the functions $G_i, H$  and the sampling vectors $v_i$ when $G$ has a finite-sum structure \eqref{eq500}. 

\begin{lemma}\label{es-lemma}
Let $G$ be a function with the form \eqref{eq500} and let each $G_i$ be bounded from below by $G_i^{inf}$ and be $L_1^i$ smooth. Let the function $H$ satisfy  $(a2), (a3)$ in Assumption \ref{ass2}.   Suppose that $\mathbb{E} [v_i^2]$ is finite for all $i$ and that $G$ and $H$ is bounded from below by  $G^{\inf}$ and $H^{\inf}$,  respectively.  Define $\Delta^{inf} = \frac{1}{N} \sum_{i=1}^{N} \bigg( (G+H)^{inf} - G_i^{inf} - H^{inf} \bigg)$.   Then $\Delta^{inf} \geq 0$ and the following ES inequality 
\begin{equation}
\mathbb{E} \left[ \| \tilde{\nabla} G(y) + \nabla_y H(x, y) \|^2 \right] \leq 2A \left( G(y) + H(x, y) - (G+H)^{inf} \right) + C 
\end{equation}
holds, where $A =\left( 4 \max_i L_1^{i} \mathbb{E} [v_i^2] +L_3^* \right)/ 2$ and $C = 2A \Delta^{inf}$.
\end{lemma}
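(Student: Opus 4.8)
\medskip
\noindent\emph{Sketch of a proof.} The plan is to reduce the statement to two elementary facts and then carefully account for the various infima. Fact~1: if $f$ is $L$-smooth and bounded below by $f^{\inf}$, then $\|\nabla f(y)\|^2\le 2L\big(f(y)-f^{\inf}\big)$ for all $y$; this follows by substituting $y-\tfrac1L\nabla f(y)$ into the descent inequality and using $f^{\inf}\le f\big(y-\tfrac1L\nabla f(y)\big)$. Fact~2: since the sampling vector is unbiased ($\mathbb E[v_i]=1$), one has the second-moment bound of the type used in \citep{KR} for finite sums, namely $\mathbb E\big[\|\tilde\nabla G(y)\|^2\big]\le\frac1N\sum_i\mathbb E[v_i^2]\,\|\nabla G_i(y)\|^2$, which is precisely where the hypothesis $\mathbb E[v_i^2]<\infty$ enters.

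\medskip
\noindent First I would dispose of $\Delta^{\inf}\ge 0$: for every $(x,y)$ we have $G(y)+H(x,y)=\frac1N\sum_i G_i(y)+H(x,y)\ge\frac1N\sum_i G_i^{\inf}+H^{\inf}$, and taking the infimum over $(x,y)$ gives $(G+H)^{\inf}\ge\frac1N\sum_i G_i^{\inf}+H^{\inf}$, i.e.\ $\Delta^{\inf}\ge 0$. Next, for the $G$-part I would apply Fact~1 to each $G_i$ (which is $L_1^i$-smooth and bounded below by $G_i^{\inf}$) to get $\|\nabla G_i(y)\|^2\le 2L_1^i\big(G_i(y)-G_i^{\inf}\big)$; inserting this into Fact~2 yields $\mathbb E\big[\|\tilde\nabla G(y)\|^2\big]\le 2M\big(G(y)-\frac1N\sum_i G_i^{\inf}\big)$ with $M:=\max_i L_1^i\mathbb E[v_i^2]$. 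For the $H$-part, the smoothness content of $(a2),(a3)$ in Assumption~\ref{ass2}, combined with $H\ge H^{\inf}$ (hence $\inf_{y'}H(x,y')\ge H^{\inf}$) and Fact~1 applied to $y\mapsto H(x,y)$, provides a bound of the form $\|\nabla_y H(x,y)\|^2\le \tfrac12 L_3^*\big(H(x,y)-H^{\inf}\big)$.

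\medskip
\noindent To finish, I would use $\|a+b\|^2\le 2\|a\|^2+2\|b\|^2$ with $a=\tilde\nabla G(y)$ and $b=\nabla_y H(x,y)$, take expectations, and insert the two bounds above, obtaining $\mathbb E\big[\|\tilde\nabla G(y)+\nabla_y H(x,y)\|^2\big]\le 4M\big(G(y)-\frac1N\sum_i G_i^{\inf}\big)+L_3^*\big(H(x,y)-H^{\inf}\big)$. The last step is a re-normalization: since $H(x,y)\ge H^{\inf}$ and $G(y)\ge\frac1N\sum_i G_i^{\inf}$, both $G(y)-\frac1N\sum_i G_i^{\inf}$ and $H(x,y)-H^{\inf}$ are dominated by $G(y)+H(x,y)-\frac1N\sum_i G_i^{\inf}-H^{\inf}=\big((G+H)(x,y)-(G+H)^{\inf}\big)+\Delta^{\inf}$. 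Factoring this out and collecting constants gives the claimed inequality with $A=\tfrac12\big(4\max_i L_1^i\mathbb E[v_i^2]+L_3^*\big)$ and $C=2A\Delta^{\inf}$.

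\medskip
\noindent The main obstacle is not any single estimate but the bookkeeping of the infima. Because $\frac1N\sum_i G_i^{\inf}+H^{\inf}$ can be strictly smaller than $(G+H)^{\inf}$, the bound cannot be written directly against $(G+H)(x,y)-(G+H)^{\inf}$; one is forced to carry the defect $\Delta^{\inf}$, to verify its nonnegativity so that it lands harmlessly inside the additive constant $C$, and to make sure the $G$- and $H$-suboptimality gaps are simultaneously controlled by the same quantity so that the two smoothness constants merge into a single $A$. Everything else — Fact~1 and the unbiased-sampling second-moment bound — is routine and follows \citep{KR}.
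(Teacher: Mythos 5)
Your argument follows the paper's proof essentially step for step: split via $\|a+b\|^2\le 2\|a\|^2+2\|b\|^2$, bound $\mathbb{E}\|\tilde\nabla G(y)\|^2$ by $\frac1N\sum_i\mathbb{E}[v_i^2]\|\nabla G_i(y)\|^2$ (note this is Jensen applied to the convex map $\|\cdot\|^2$, not a consequence of unbiasedness), apply the standard inequality $\|\nabla f\|^2\le 2L(f-f^{\inf})$ to each $G_i$ and to $H(x,\cdot)$, and absorb the mismatch between $\frac1N\sum_i G_i^{\inf}+H^{\inf}$ and $(G+H)^{\inf}$ into the nonnegative defect $\Delta^{\inf}$, which becomes the additive constant $C$. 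The structure, including the verification that $\Delta^{\inf}\ge 0$, is the same as in Appendix~\ref{app1}.

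There is one concrete quantitative slip. Your Fact~1 applied to $y\mapsto H(x,y)$ (which is $L_3(x)\le L_3^*$-smooth and bounded below by $\inf_{y'}H(x,y')\ge H^{\inf}$) yields $\|\nabla_y H(x,y)\|^2\le 2L_3^*\bigl(H(x,y)-H^{\inf}\bigr)$, not $\tfrac12 L_3^*\bigl(H(x,y)-H^{\inf}\bigr)$ as you wrote; you are off by a factor of $4$, and it is exactly this understated constant that lets you land on $A=\tfrac12\bigl(4\max_i L_1^i\mathbb{E}[v_i^2]+L_3^*\bigr)$. Carrying the correct factor through your own chain gives $4M\bigl(G(y)-\frac1N\sum_i G_i^{\inf}\bigr)+4L_3^*\bigl(H(x,y)-H^{\inf}\bigr)$, and the domination step then produces $A=2\max_i L_1^i\mathbb{E}[v_i^2]+2L_3^*$. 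This does not affect the substance of the lemma — an ES inequality with finite constants $A,C\ge 0$ is all that is used downstream — and, for what it is worth, the paper's own derivation commits an analogous slip at the corresponding step (bounding the $4L_3^*(H-H^{\inf})$ term by $(4\max_i L_1^i\mathbb{E}[v_i^2]+L_3^*)(H-H^{\inf})$ requires $3L_3^*\le 4\max_i L_1^i\mathbb{E}[v_i^2]$, which is not assumed). But as written, your $H$-gradient bound is not what Fact~1 delivers, and the stated value of $A$ does not follow from your argument.
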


See the proof of Lemma \ref{es-lemma} in Appendix \ref{app1}.

\subsection{Convergence Analysis}
\label{convergence}
We now establish the convergence rate of the STAM Algorithm \ref{alg:STAM}.  In analyzing STAM Algorithm \ref{alg:STAM},  we need the following mild assumptions on the non-convex  functions $G$ and $H$.  
\vspace{0.1cm}
\begin{assumption}\label{ass2} Functions $G$ and $H$ satisfy  
\vspace{0.1cm}
\begin{itemize}
\item [(a1)] $G$ is bounded from below by $G^{inf}$, and $G$ has a Lipschitz continuous gradient, i.e, there exists a constant $L_1 > 0$ such that 
\begin{equation}\label{G-lip}
\| \nabla G(y_1) - \nabla G(y_2) \| \leq L_1 \| y_1 - y_2 \|, ~~~\forall y_1, y_2 \in \mathbb{R}^{n}.
\end{equation}

\item [(a2)] There exist $L_2(y), L_4(y) > 0$ such that
\begin{equation}\label{H-xlip}
\| \nabla_x H(x_1, y) - \nabla_x H(x_2, y) \| \leq L_2(y) \| x_1 - x_2 \|, ~~~\forall x_1, x_2 \in \mathbb{R}^{n},
\end{equation}
\begin{equation}\label{Hx-ylip}
\| \nabla_x H(x, y_1) - \nabla_x H(x, y_2) \| \leq L_4(x) \| y_1 - y_2 \|, ~~~\forall y_1, y_2 \in \mathbb{R}^{n}
\end{equation}
and there exist $L_3(x), L_5(x) > 0$ such that
\begin{equation}\label{H-ylip}
\| \nabla_y H(x, y_1) - \nabla_y H(x, y_2) \| \leq L_3(x) \| y_1 - y_2 \|, ~~~\forall y_1, y_2 \in \mathbb{R}^{n},
\end{equation}
\begin{equation}\label{Hy-ylip}
\| \nabla_y H(x_1, y) - \nabla_y H(x_2, y) \| \leq L_5(y) \| x_1 - x_2 \|, ~~~\forall x_1, x_2 \in \mathbb{R}^{n},
\end{equation}
\item[(a3)] There exist $L_2^{*}, L_3^{*}, L_4^{*} \geq 0$ and $L > 0$ such that
 \begin{equation}\label{lip-const}
 \begin{aligned}
& \sup_{y \in \mathbb{R}^n} L_2(y) \leq L_2^{*}, \quad \sup_{x \in \mathbb{R}^n} L_3(x) \leq L_3^{*}, \\
& \sup_{x \in \mathbb{R}^n} L_4(x) \leq L_4^{*}, \quad \sup_{y \in \mathbb{R}^n} L_5(y) \leq L_5^{*},  \quad \textmd{and} \quad L_3^{*} + L_1 \leq L.
\end{aligned}
 \end{equation}
\end{itemize}
\end{assumption}

In addition, let $l \in \mathbb{R}$ be such that $H(\cdot, y) + \frac{l}{2}\| \cdot \|^2 $ is convex for all $y \in \mathbb{R}^n$. Note that such $l$ always exists by \eqref{H-xlip} and \eqref{lip-const}.   Particularly, one can always take $l=L_2^*$ .

If the stochastic gradient $\tilde{\nabla} G$ satisfies the ES inequality \eqref{422}, then the following lemma shows that $\tilde{\nabla} G(y) + \nabla_y H(x,y)$ also satisfies the corresponding ES assumption. This result is essential for analyzing the convergence of STAM Algorithm \ref{alg:STAM}. 
\begin{lemma}\label{ES-GH}
Suppose that the stochastic gradient $\tilde{\nabla} G$ satisfies Assumption \ref{ass3} and $G, H$  satisfy Assumption \ref{ass2}. Then we have \begin{equation}\label{gh-es}
\mathbb{E}\left[\|\tilde{\nabla} G(y) + \nabla_y H(x,y) \|^2 \right] \leq 2A \left[ G(y) + H(x, y) - (G+H)^{inf} \right] + C, 
\end{equation}
where $A = \max (2A_0 + 2B_0 L_1, 2 L_3^*)$ and $C = 2A \left[ (G+H)^{inf} - G^{inf} - H^{inf} \right] + 2C_0$.
\end{lemma}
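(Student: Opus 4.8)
The plan is to bound $\mathbb{E}[\|\tilde{\nabla} G(y)+\nabla_y H(x,y)\|^2]$ by splitting it into a term involving only $\tilde{\nabla} G(y)$ and a term involving only $\nabla_y H(x,y)$, then apply the ES inequality \eqref{422} to the first and a Lipschitz-type bound to the second. Concretely, I would first use the elementary inequality $\|a+b\|^2 \le 2\|a\|^2 + 2\|b\|^2$ to get
\begin{equation}
\mathbb{E}\left[\|\tilde{\nabla} G(y) + \nabla_y H(x,y)\|^2\right] \le 2\,\mathbb{E}\left[\|\tilde{\nabla} G(y)\|^2\right] + 2\|\nabla_y H(x,y)\|^2 .
\end{equation}
For the first term, Assumption \ref{ass3} gives $\mathbb{E}[\|\tilde{\nabla} G(y)\|^2] \le 2A_0(G(y)-G^{\inf}) + B_0\|\nabla G(y)\|^2 + C_0$. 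The gradient term $\|\nabla G(y)\|^2$ must still be controlled by a function-value gap; this is where the standard smoothness-implies-bounded-gradient estimate comes in: since $G$ is $L_1$-smooth (by (a1)) and bounded below by $G^{\inf}$, one has $\|\nabla G(y)\|^2 \le 2L_1(G(y)-G^{\inf})$. Substituting, the first term becomes $\le 2(2A_0 + 2B_0 L_1)(G(y)-G^{\inf}) + 2C_0$.

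For the second term, I would use the $L_3^*$-Lipschitz continuity of $\nabla_y H(x,\cdot)$ from \eqref{H-ylip}–\eqref{lip-const}. Fixing $x$ and viewing $H(x,\cdot)$ as a function of $y$ with $L_3(x)\le L_3^*$-Lipschitz gradient, and using that $H(x,\cdot)$ is bounded below by $H^{\inf}$, the same smoothness estimate yields $\|\nabla_y H(x,y)\|^2 \le 2L_3^*\big(H(x,y) - \inf_{y'} H(x,y')\big) \le 2L_3^*\big(H(x,y) - H^{\inf}\big)$. Thus $2\|\nabla_y H(x,y)\|^2 \le 4L_3^*(H(x,y)-H^{\inf})$.

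Combining the two bounds and setting $A = \max(2A_0 + 2B_0 L_1,\, 2L_3^*)$, I would write $2(2A_0+2B_0L_1)(G(y)-G^{\inf}) + 4L_3^*(H(x,y)-H^{\inf}) \le 2A(G(y)-G^{\inf}) + 2A(H(x,y)-H^{\inf})$, and then rewrite $2A(G(y)-G^{\inf}) + 2A(H(x,y)-H^{\inf}) = 2A\big(G(y)+H(x,y)-(G+H)^{\inf}\big) + 2A\big((G+H)^{\inf} - G^{\inf} - H^{\inf}\big)$. Adding back the leftover $2C_0$ gives exactly the claimed inequality with $C = 2A[(G+H)^{\inf} - G^{\inf} - H^{\inf}] + 2C_0$. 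I do not anticipate a serious obstacle here; the only point requiring a little care is the passage from $\|\nabla_y H(x,y)\|^2$ to the gap $H(x,y)-H^{\inf}$, since one must first bound by $H(x,y)-\inf_{y'}H(x,y')$ and then use $\inf_{y'}H(x,y') \ge H^{\inf}$ — this monotone weakening is what forces the correction constant in $C$ (and is the reason $(G+H)^{\inf}-G^{\inf}-H^{\inf}\ge 0$ appears, consistent with Lemma \ref{es-lemma}).
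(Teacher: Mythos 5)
Your proposal is correct and follows essentially the same route as the paper's own proof: the same splitting $\|a+b\|^2\le 2\|a\|^2+2\|b\|^2$, the ES inequality for the $\tilde{\nabla}G$ term, the smoothness-implies-bounded-gradient estimate for both $\|\nabla G(y)\|^2$ and $\|\nabla_y H(x,y)\|^2$, and the same regrouping via $A=\max(2A_0+2B_0L_1,\,2L_3^*)$ to produce the constant $C$. Your remark on passing from $\inf_{y'}H(x,y')$ to $H^{\inf}$ is a small point of care that the paper's proof glosses over, but the argument is otherwise identical.
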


We refer the proof of Lemma \ref{ES-GH} to Appendix \ref{app1}.  For $G$ being a finite-sum as in \eqref{eq500}, from Lemma \ref{es-lemma} we can see that $\tilde{\nabla} G(y) + \nabla_y H(x,y)$ automatically satisfied the ES assumption under mild conditions on $G_i$ and $H$.  

The following lemma plays an important role in establishing the convergence of Algorithm \ref{alg:STAM} when applied to solving the non-convex problem \eqref{model}.

\begin{lemma}\label{conv-lemma} 
Let $\{ (y^t, x^t, u^t, z^t) \}$ be a sequence generated from the STAM Algorithm \ref{alg:STAM}. Suppose that the stochastic gradient $\tilde{\nabla} G$ satisfies Assumption \ref{ass3} and $G, H$  satisfy Assumption \ref{ass2}. Suppose that the parameters $\beta >0$  and $\gamma > 0$ are chosen such that 
\begin{equation}\label{k1}
\mathcal{K}_1 :=  \frac{ 1- (10(l + L_2^*)+4)\gamma -5 (L_2^*)^2 \gamma^2}{4 \gamma}  > 0. 
\end{equation}
Then
\begin{equation}\label{ineq1}
\begin{aligned}
&\sum_{t= 0}^{T-1} \omega_t \eta_t + \frac{\beta \mathcal{K}_1}{\mathcal{K}_2} \sum_{t=0}^{T-1} \omega_t \mathbb{E}\left(  \frac{\| z^t - z^{t-1}\|}{\gamma} \right)^2 \leq \beta \omega_{-1} \delta_0 + \beta \omega_{-1} \mathcal{M}_{0}^{\prime}  + \frac{(L+M)C}{2\beta} \sum_{t=0}^{T-1} \omega_t, \\
\end{aligned}
\end{equation}
where 
\[
\eta_t = \mathbb{E} \| \nabla G(y^t) + \nabla_y H(x^t, y^t) \|^2, ~~\omega_t = \frac{\omega_{-1}}{\left(1 + \frac{(L+M)A}{\beta^2} \right)^{t+1}}~~ \textmd{for} ~~\omega_{-1} >0~~arbitrary, 
\]
$\mathcal{M}_{0}^{\prime} = \mathbb{E} \left[ \mathcal{M}_{0} - \inf_{t\geq 0} \mathcal{M}_t \right]$ with 
$$
\mathcal{M}_t := \mathcal{M} (x^t, u^t, z^t) = F(u^{t}) + \frac{1}{2\gamma} \| 2x^{t} - u^{t} - z^{t} \|^2 - \frac{1}{2\gamma}\| x^{t} - z^{t} \|^2 - \frac{1}{\gamma} \| u^{t} - x^{t} \|^2,  
$$
 $\delta_t = \mathbb{E} \left[ G(y^t) + H(x^t, y^t) - (G+H)^{inf} \right]$,  $\mathcal{K}_2 := \frac{5(1 + \gamma L_2^*)^2}{4\gamma^2}$ and $M=2  \mathcal{K}_3$ with 
$$
 \mathcal{K}_3 := L_4^* ( 1 + 5\gamma L_4^* ) + \frac{5 \mathcal{K}_1 (L_4^*)^2}{\mathcal{K}_2}. 
$$
\end{lemma}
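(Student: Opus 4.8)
The plan is to assemble a single stochastic Lyapunov function from the natural energy of each block and then run the weighted-telescoping device of \citep{KR}, which converts an Expected-Smoothness bound into a convergence rate. For the $y$-block, \eqref{algy} is the explicit step $y^{t+1}=y^t-\tfrac1\beta g^t$ with $g^t:=\tilde\nabla G(y^t)+\nabla_y H(x^t,y^t)$, so $\mathbb{E}[g^t\mid\mathcal F_t]=\bar g^t:=\nabla G(y^t)+\nabla_y H(x^t,y^t)$ by Definition \ref{ass1}. Since $y\mapsto G(y)+H(x^t,y)$ has an $(L_1+L_3^{*})\le L$-Lipschitz gradient (Assumption \ref{ass2}), the descent lemma yields
\[
G(y^{t+1})+H(x^t,y^{t+1})\le G(y^t)+H(x^t,y^t)-\tfrac1\beta\langle\bar g^t,g^t\rangle+\tfrac{L}{2\beta^2}\|g^t\|^2,\qquad\|y^{t+1}-y^t\|^2=\tfrac1{\beta^2}\|g^t\|^2.
\]

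For the $x$-block, the steps \eqref{algx}--\eqref{algz} form a non-convex Douglas--Rachford iteration applied to $x\mapsto H(x,y^{t+1})+F(x)$, and $\mathcal M_t$ is the associated merit function. Using the optimality conditions of \eqref{algx} and \eqref{algu}, the semiconvexity constant $l$ (which exists by \eqref{H-xlip} and \eqref{lip-const}), and the Lipschitz data of Assumption \ref{ass2}, I would establish a sufficient-decrease estimate
\[
\mathcal M_{t+1}\le\mathcal M_t-\mathcal K_1\Big(\tfrac{\|z^{t+1}-z^t\|}{\gamma}\Big)^2+\textmd{(coupling terms involving }\|y^{t+1}-y^t\|^2\textmd{ and }\|z^t-z^{t-1}\|^2\textmd{)},
\]
where $\mathcal K_1>0$ is precisely what makes the $z$-increment strictly dissipative. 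The coupling terms arise because moving $y^t\to y^{t+1}$ perturbs $H(\cdot,y^{t+1})$ and hence $x^{t+1}$; estimating $\|x^{t+1}-x^t\|$ (equivalently the $z$-increment) via the $x$-optimality condition together with the bounds $L_2^{*},L_4^{*}$ is what produces the constants $\mathcal K_2$, $\mathcal K_3$ and $M=2\mathcal K_3$ (the factor $5$ in $\mathcal K_2$ coming from a five-term Young inequality), and after reindexing the chain of $z$-increments one is left with a term $\tfrac{\mathcal K_1}{\mathcal K_2}(\|z^t-z^{t-1}\|/\gamma)^2$ on the good side.

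Adding the two estimates, writing $\Psi_t:=\delta_t+\mathcal M_t$, substituting $\|y^{t+1}-y^t\|^2=\|g^t\|^2/\beta^2$, taking $\mathbb{E}[\,\cdot\mid\mathcal F_t\,]$ (so that $\langle\bar g^t,g^t\rangle\to\|\bar g^t\|^2$), and invoking Lemma \ref{ES-GH} --- whose bound \eqref{gh-es} replaces $\mathbb{E}[\|g^t\|^2\mid\mathcal F_t]$ by $2A\big(G(y^t)+H(x^t,y^t)-(G+H)^{inf}\big)+C$, with no residual $\|\bar g^t\|^2$ term --- leads to a one-step inequality of the schematic shape
\[
\mathbb{E}[\delta_{t+1}+\mathcal M_{t+1}\mid\mathcal F_t]\le(1+\rho)\delta_t+\mathcal M_t-\tfrac1\beta\|\bar g^t\|^2-\tfrac{\mathcal K_1}{\mathcal K_2}\Big(\tfrac{\|z^t-z^{t-1}\|}{\gamma}\Big)^2+\tfrac{(L+M)C}{2\beta^2},
\]
with $\rho:=(L+M)A/\beta^2$; here the ``drift'' $\rho\,\delta_t$ is the term that cannot be absorbed directly and must be killed by the weights.

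Finally, multiply the one-step inequality by $\beta\omega_t$, use $\omega_t(1+\rho)=\omega_{t-1}$, take full expectations, and sum over $t=0,\dots,T-1$: the $\delta_t$-terms telescope, leaving only $\beta\omega_{-1}\delta_0$ and $\beta\omega_{T-1}\mathbb{E}\delta_T\ge0$ (which is dropped), while the $\mathcal M_t$-terms telescope up to the monotone-weight remainder $\sum_t(\omega_{t-1}-\omega_t)\mathcal M_t\ge(\omega_0-\omega_{T-1})\inf_s\mathcal M_s$, which is exactly what produces the term $\beta\omega_{-1}\mathcal M_0'=\beta\omega_{-1}\mathbb{E}[\mathcal M_0-\inf_t\mathcal M_t]$ on the right-hand side (boundedness from below of $F$, $G$, $H$ ensures $\inf_t\mathcal M_t>-\infty$). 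Collecting the surviving negative terms $\sum_t\omega_t\eta_t$ and $\tfrac{\beta\mathcal K_1}{\mathcal K_2}\sum_t\omega_t\mathbb{E}(\|z^t-z^{t-1}\|/\gamma)^2$ on the left then gives \eqref{ineq1}. I expect the main obstacle to be the Douglas--Rachford step --- deriving the sufficient-decrease estimate for $\mathcal M_t$ in the presence of the moving $y$, and tracking all the Lipschitz constants so that, after dominating the $\|x^{t+1}-x^t\|$-type coupling terms, the coefficient of $(\|z^t-z^{t-1}\|/\gamma)^2$ comes out exactly as $\mathcal K_1/\mathcal K_2$ and the whole bookkeeping closes under the single scalar condition $\mathcal K_1>0$.
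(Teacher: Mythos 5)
Your plan coincides with the paper's proof essentially step for step: the paper establishes exactly your two one-step estimates as Lemma \ref{par1} (the $y$-descent with unbiasedness and the ES bound from Lemma \ref{ES-GH}) and Lemma \ref{par2} (a sufficient-decrease estimate for $\Phi(x^t,u^t,z^t;y^{t+1})=H(x^t,y^{t+1})+\mathcal M_t$ in which $-\mathcal K_1\|x^{t+1}-x^t\|^2$ is converted to $-\tfrac{\mathcal K_1}{\mathcal K_2}\bigl(\|z^t-z^{t-1}\|/\gamma\bigr)^2$ via $\|z^t-z^{t-1}\|\leq(1+\gamma L_2^*)\|x^{t+1}-x^t\|+\gamma L_4^*\|y^{t+1}-y^t\|$), then cancels the cross terms, sets $M=2\mathcal K_3$, multiplies by $\beta\omega_t$, and telescopes using $\omega_t\bigl(1+\tfrac{(L+M)A}{\beta^2}\bigr)=\omega_{t-1}$ together with $\mathcal M_t'\geq 0$. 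The only portion you leave schematic --- the constant-tracking that produces $\mathcal K_1,\mathcal K_2,\mathcal K_3$ --- is carried out in the paper precisely as you anticipate, with the factor $5$ indeed coming from the Young-type inequalities in \eqref{eq27} and \eqref{eq35}.
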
 

\begin{remark}\label{Rem=09836}
Notice that $\lim_{\gamma \to 0^+}\mathcal{K}_1 = +\infty$. Therefore, for any given $l \in \mathbb{R}$ and $L_2^*\geq 0$, the condition \eqref{k1} will be satisfied when $\gamma>0$ is  sufficiently small. Moreover, using the quadratic formula we easily obtain the following computable threshold 
\[
0 < \gamma < \frac{\sqrt{ ( 10 L_2^* + 10 l +4 )^2 + 20 (L_2^*)^2} - (10 L_2^* + 10 l +4)}{10 L_2^*}
\]
such that \eqref{k1} holds. 
\end{remark}

See the proof of Lemma \ref{conv-lemma}  in Appendix \ref{app1}.  Lemma \ref{conv-lemma}  provides a bound on a weighted sum of stochastic gradients about $y$ and deterministic gradients about $x$.  The similar idea of weighting different iterations has also been used in the  analysis of stochastic gradient descent in the convex setting \citep{RSS, SZh, SU} and in the non-convex setting \citep{KR}.

\begin{theorem}\label{conv}
Let $\{ (y^t, x^t, u^t, z^t) \}$ be a sequence generated from STAM Algorithm \ref{alg:STAM}. Suppose that the stochastic gradient $\tilde{\nabla} G$ satisfies Assumption \ref{ass3} and $G, H$  satisfy Assumption \ref{ass2}.  Suppose that the parameters $\gamma, \beta >0$  are chosen such that \eqref{k1}, $\left[(L_2^*)^2 + (L_5^*)^2 \right] \gamma^2 \leq 1$ and $\beta \mathcal{K}_1/ \mathcal{K}_2 \geq 2$ hold.  Then we have the following estimate: 
\begin{equation}\label{con-inequality}
\begin{aligned}
&\min_{0 \leq t \leq T-1} \bigg[ \mathbb{E} \| \nabla G(y^t) + \nabla_y H(u^t, y^t) \|^2 + \mathbb{E}~\textmd{dist}^2 \big( 0,  \nabla_x H(u^t, y^t) + \partial F(u^t) \big) \bigg] \\
&\leq  2\frac{\left( 1+ \frac{(L+M)A}{\beta^2} \right)^{T}}{T} \beta ( \delta_0 + \mathcal{M}_{0}^{\prime} ) + \frac{(L+M)C}{\beta}, 
\end{aligned}
\end{equation}
where the constants $\delta_0$,  $ \mathcal{M}_{0}^{\prime}$ and $M$ are defined as in Lemma \ref{conv-lemma}. 
\end{theorem}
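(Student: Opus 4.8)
The plan is to deduce Theorem~\ref{conv} from Lemma~\ref{conv-lemma} in two steps. Writing
\[
\Lambda_t:=\mathbb{E}\|\nabla G(y^{t})+\nabla_y H(u^{t},y^{t})\|^2+\mathbb{E}\,\mathrm{dist}^2\big(0,\ \nabla_x H(u^{t},y^{t})+\partial F(u^{t})\big)
\]
for the bracketed quantity inside the minimum in \eqref{con-inequality}, I would first bound $\Lambda_t$ termwise by $2\eta_t$ plus a multiple of $\mathbb{E}(\|z^t-z^{t-1}\|/\gamma)^2$, and then convert the weighted-sum estimate \eqref{ineq1} of Lemma~\ref{conv-lemma} into a bound on $\min_{0\le t\le T-1}\Lambda_t$.

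For the first step I would use the first-order optimality conditions of the inner problems \eqref{algx} and \eqref{algu}. The condition for \eqref{algx} is $\nabla_x H(x^{t},y^{t})+\tfrac{1}{\gamma}(x^{t}-z^{t-1})=0$, and the condition for \eqref{algu}, combined with the update \eqref{algz}, gives $\tfrac{1}{\gamma}(x^{t}-z^{t})\in\partial F(u^{t})$ because $2x^{t}-z^{t-1}-u^{t}=x^{t}-z^{t}$. Adding $\nabla_x H(u^{t},y^{t})$ and cancelling terms yields
\[
\nabla_x H(u^{t},y^{t})+\tfrac{1}{\gamma}(x^{t}-z^{t})=\big(\nabla_x H(u^{t},y^{t})-\nabla_x H(x^{t},y^{t})\big)+\tfrac{1}{\gamma}(z^{t-1}-z^{t}),
\]
so, using $u^{t}-x^{t}=z^{t}-z^{t-1}$ together with \eqref{H-xlip} and \eqref{lip-const}, one gets $\mathrm{dist}\big(0,\nabla_x H(u^{t},y^{t})+\partial F(u^{t})\big)\le(L_2^{*}+1/\gamma)\|z^{t}-z^{t-1}\|$. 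Similarly \eqref{Hy-ylip} and \eqref{lip-const} give $\|\nabla_y H(x^{t},y^{t})-\nabla_y H(u^{t},y^{t})\|\le L_5^{*}\|z^{t}-z^{t-1}\|$, and then squaring, taking expectations and using $(a+b)^2\le2a^2+2b^2$ gives $\mathbb{E}\|\nabla G(y^{t})+\nabla_y H(u^{t},y^{t})\|^2\le 2\eta_t+2(L_5^{*})^2\gamma^2\,\mathbb{E}(\|z^{t}-z^{t-1}\|/\gamma)^2$. Adding the two estimates,
\[
\Lambda_t\le 2\eta_t+\big[2(L_5^{*})^2\gamma^2+(1+\gamma L_2^{*})^2\big]\,\mathbb{E}\Big(\frac{\|z^{t}-z^{t-1}\|}{\gamma}\Big)^2 .
\]

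For the second step, the crucial algebraic fact is that, since $(1+\gamma L_2^{*})^2\le 2+2(\gamma L_2^{*})^2$ and, by hypothesis, $[(L_2^{*})^2+(L_5^{*})^2]\gamma^2\le1$,
\[
2(L_5^{*})^2\gamma^2+(1+\gamma L_2^{*})^2\le 2+2\big[(L_2^{*})^2+(L_5^{*})^2\big]\gamma^2\le4\le\frac{2\beta\mathcal{K}_1}{\mathcal{K}_2},
\]
the last step being exactly $\beta\mathcal{K}_1/\mathcal{K}_2\ge2$. Multiplying \eqref{ineq1} by $2$ and combining it with the per-$t$ bound on $\Lambda_t$ and this coefficient comparison, the weighted sum $\sum_{t=0}^{T-1}\omega_t\Lambda_t$ is bounded above by $2\beta\omega_{-1}(\delta_0+\mathcal{M}_0^{\prime})+\frac{(L+M)C}{\beta}\sum_{t=0}^{T-1}\omega_t$. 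Since $L,M,A\ge0$, the weights $\omega_t$ are nonincreasing in $t$, so $\sum_{t=0}^{T-1}\omega_t\ge T\omega_{T-1}=T\omega_{-1}\big(1+(L+M)A/\beta^2\big)^{-T}$; dividing through by $\sum_{t=0}^{T-1}\omega_t$, using $\min_{0\le t\le T-1}\Lambda_t\le(\sum_t\omega_t\Lambda_t)/(\sum_t\omega_t)$, and noting that $\omega_{-1}$ cancels, we obtain precisely \eqref{con-inequality}.

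I expect the main obstacle to be the coefficient comparison above: it is what dictates the constants in the two extra hypotheses $[(L_2^{*})^2+(L_5^{*})^2]\gamma^2\le1$ and $\beta\mathcal{K}_1/\mathcal{K}_2\ge2$, and a looser use of Young's inequality when splitting either the $\mathrm{dist}$ residual or $\|\nabla G(y^t)+\nabla_y H(u^t,y^t)\|^2$ would leave the term $\mathbb{E}(\|z^t-z^{t-1}\|/\gamma)^2$ too large to be absorbed into the corresponding term of \eqref{ineq1}. A secondary, purely clerical point is to keep the iteration indices consistent: the residual at $(u^{t},y^{t})$ involves $z^{t}-z^{t-1}$, produced at loop step $t-1$, so the index range $0\le t\le T-1$ has to be matched to the summation range in Lemma~\ref{conv-lemma}, including the convention used there for the $t=0$ term.
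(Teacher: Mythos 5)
Your proposal is correct and follows essentially the same route as the paper: the same optimality-condition identities for \eqref{algx} and \eqref{algu} (giving $\mathrm{dist}\le(1+\gamma L_2^*)\|z^t-z^{t-1}\|/\gamma$ and the $L_5^*$ bound for the $y$-gradient), the same absorption of the $\|z^t-z^{t-1}\|$ terms via $[(L_2^*)^2+(L_5^*)^2]\gamma^2\le1$ and $\beta\mathcal{K}_1/\mathcal{K}_2\ge2$, and the same weighted-average bound on the minimum using $W_T\ge T\omega_{T-1}$. If anything, your ordering --- bounding $\sum_t\omega_t\Lambda_t$ first and only then passing to $\min_t\Lambda_t$ --- is slightly cleaner than the paper's intermediate step, which briefly splits the minimum of a sum into a sum of minima.
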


 Although the bound of Theorem \ref{conv} shows possible exponential growth, by carefully controlling the parameters we could obtain an $\epsilon$-stationary point  with the optimal $\mathcal{O}(\epsilon^{-4})$ rate. More precisely, we have the following convergence rate when using our STAM Algorithm \ref{alg:STAM} to find an $\epsilon$-stationary point of the non-convex optimization problem \eqref{model}.

\begin{theorem}[{\bf Convergence rate.}]\label{conv-rate}
Let $\{ (y^t, x^t, u^t, z^t) \}$ be a sequence generated from STAM Algorithm \ref{alg:STAM}. Suppose that the stochastic gradient $\tilde{\nabla} G$ satisfies Assumption  \ref{ass3} and $G, H$  satisfy Assumption \ref{ass2}.  Suppose that the parameter $\gamma >0$ is chosen such that \eqref{k1} and $\left[(L_2^*)^2 + (L_5^*)^2 \right] \gamma^2 \leq 1$ hold.   Given $\epsilon > 0$, choose the parameter $\beta  > \min \Big\{ \sqrt{(L+M)AT},$\\
~$\frac{2\mathcal{K}_2}{\mathcal{K}_1}, ~\frac{2(L+M)C}{\epsilon^2}\Big\}$. If 
\begin{equation}
T \geq \frac{12(L+M)(\delta_0 + M_0^{\prime})}{\epsilon^2} \max \Big\{ \frac{2C}{\epsilon^2}, \frac{12(\delta_0+ M_0^{\prime})A}{\epsilon^2}, \frac{2\mathcal{K}_2}{\mathcal{K}_1(L+M)} \Big\} = \mathcal{O}(\epsilon^{-4}), 
\end{equation}
then there exists $0\leq t_0 \leq T-1$ such that 
\begin{equation}
\begin{aligned}
& \mathbb{E} \| \nabla G(y^{t_0})) + \nabla_y H(u^{t_0}, y^{t_0}) \| \leq \epsilon~~\textmd{and}~~ \mathbb{E}~\textmd{dist}\big(0, \nabla_x H(u^{t_0}, y^{t_0}) + \partial F(u^{t_0}) \big) \leq \epsilon.
\end{aligned}
\end{equation}
\end{theorem}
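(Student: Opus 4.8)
The plan is to derive the claim directly from the non-asymptotic bound \eqref{con-inequality} of Theorem \ref{conv}, by choosing the free parameters $\beta$ and $T$ so that its right-hand side drops below $\epsilon^2$. That right-hand side is a sum of two competing terms: the ``bias'' term $(L+M)C/\beta$, which decreases in $\beta$, and the term $2\beta(\delta_0+\mathcal{M}_0')\big(1+(L+M)A/\beta^2\big)^{T}/T$, whose danger is the exponential factor in $T$. First I would require $\beta^2\geq(L+M)AT$; then $(L+M)A/\beta^2\leq 1/T$ and the elementary bound $(1+1/T)^T\leq e$ kills the exponential blow-up, leaving that term at most $2e\beta(\delta_0+\mathcal{M}_0')/T$, so that imposing $T\geq 12\beta(\delta_0+\mathcal{M}_0')/\epsilon^2$ (and using $4e<12$) makes this contribution at most $\epsilon^2/2$. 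For the bias term, requiring $\beta\geq 2(L+M)C/\epsilon^2$ gives $(L+M)C/\beta\leq\epsilon^2/2$; and the hypothesis $\beta\mathcal{K}_1/\mathcal{K}_2\geq 2$ of Theorem \ref{conv} is just $\beta\geq 2\mathcal{K}_2/\mathcal{K}_1$. In short, $\beta$ should be at least as large as each of $\sqrt{(L+M)AT}$, $2(L+M)C/\epsilon^2$ and $2\mathcal{K}_2/\mathcal{K}_1$.

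The one point needing care is that this triple of lower bounds on $\beta$ must be compatible with $T\geq 12\beta(\delta_0+\mathcal{M}_0')/\epsilon^2$, which is self-referential because $\beta$ may be forced to grow like $\sqrt{T}$. Concretely, a valid $\beta$ exists precisely when $T\epsilon^2/\big(12(\delta_0+\mathcal{M}_0')\big)$ dominates all three of $\sqrt{(L+M)AT}$, $2(L+M)C/\epsilon^2$ and $2\mathcal{K}_2/\mathcal{K}_1$; solving these three inequalities for $T$ (squaring in the first case) produces exactly the three candidate thresholds $144(L+M)A(\delta_0+\mathcal{M}_0')^2/\epsilon^4$, $24(L+M)C(\delta_0+\mathcal{M}_0')/\epsilon^4$ and $24\mathcal{K}_2(\delta_0+\mathcal{M}_0')/(\mathcal{K}_1\epsilon^2)$, whose maximum is exactly the displayed $T$-bound and is $\mathcal{O}(\epsilon^{-4})$. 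With $T$ at least this large and $\beta$ taken equal to the maximum of the three quantities above, every hypothesis of Theorem \ref{conv} holds and each of the two summands in \eqref{con-inequality} is at most $\epsilon^2/2$, so its whole right-hand side is $\leq\epsilon^2$.

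Finally, since the left-hand side of \eqref{con-inequality} is a minimum over $t\in\{0,\dots,T-1\}$ of a sum of two nonnegative terms, there is an index $t_0$ for which both $\mathbb{E}\|\nabla G(y^{t_0})+\nabla_y H(u^{t_0},y^{t_0})\|^2\leq\epsilon^2$ and $\mathbb{E}\,\textmd{dist}^2\big(0,\nabla_x H(u^{t_0},y^{t_0})+\partial F(u^{t_0})\big)\leq\epsilon^2$ hold simultaneously. Applying Jensen's inequality for the concave function $\sqrt{\cdot}$, namely $\mathbb{E}[\xi]\leq(\mathbb{E}[\xi^2])^{1/2}$ for a nonnegative random variable $\xi$, to each of these two quantities yields the asserted $\mathbb{E}\|\nabla G(y^{t_0})+\nabla_y H(u^{t_0},y^{t_0})\|\leq\epsilon$ and $\mathbb{E}\,\textmd{dist}\big(0,\nabla_x H(u^{t_0},y^{t_0})+\partial F(u^{t_0})\big)\leq\epsilon$.

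I expect the only real obstacle to be this bookkeeping of constants, not any analytic subtlety: $\beta$ must be large enough to suppress both $(L+M)C/\beta$ and $\big(1+(L+M)A/\beta^2\big)^T$, yet the iteration budget $T$ grows linearly in $\beta$, so one has to disentangle the coupled constraints $\beta\geq\sqrt{(L+M)AT}$ and $T\geq 12\beta(\delta_0+\mathcal{M}_0')/\epsilon^2$ and eliminate $\beta$ in order to land on the explicit $\mathcal{O}(\epsilon^{-4})$ rate. Beyond Theorem \ref{conv} and the inequality $(1+1/T)^T\leq e$, no further ingredient is needed.
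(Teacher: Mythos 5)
Your proposal is correct and follows essentially the same route as the paper's own proof: invoke the bound \eqref{con-inequality} of Theorem \ref{conv}, take $\beta\geq\sqrt{(L+M)AT}$ to tame the exponential factor via $(1+x)\leq e^x$, impose $\beta\geq 2(L+M)C/\epsilon^2$ and $\beta\geq 2\mathcal{K}_2/\mathcal{K}_1$, and then eliminate $\beta$ from the coupled constraint $T\geq 12\beta(\delta_0+\mathcal{M}_0')/\epsilon^2$ to obtain exactly the three displayed thresholds on $T$. Your closing step (picking $t_0$ attaining the minimum and applying Jensen's inequality to pass from second moments to first moments) is the same conclusion the paper leaves implicit, so no substantive difference remains.
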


For the clarity of the presentation, we refer the  proof of Theorem \ref{conv} and Theorem \ref{conv-rate} to  Appendix \ref{P=MT0}.

\section{Experiments}
\label{sec:examples} 
In this section, we give the numerical experiments of training DNNs with relaxed binary weights using  Algorithm \ref{alg:STAM}. Our experiments are mainly performed on three different network structures, VGG-11 \citep{SZ}, VGG-16 \citep{SZ}, and ResNet-18 \citep{HZRS}.  These DNNs are trained  using two different data sets CIFAR-10 \citep{K} and CIFAR-100 \citep{K} respectively.  All experiments are implemented in  PyTorch platform with Python 3.6. The experiments are run on a remote desktop with a Tesla V100 GPU and 32GB memory. 

\begin{figure}[!ht]
	\centering
	\subfloat[{\tt {\bf CIFAR-10}}]{ \includegraphics[width=0.40\linewidth]{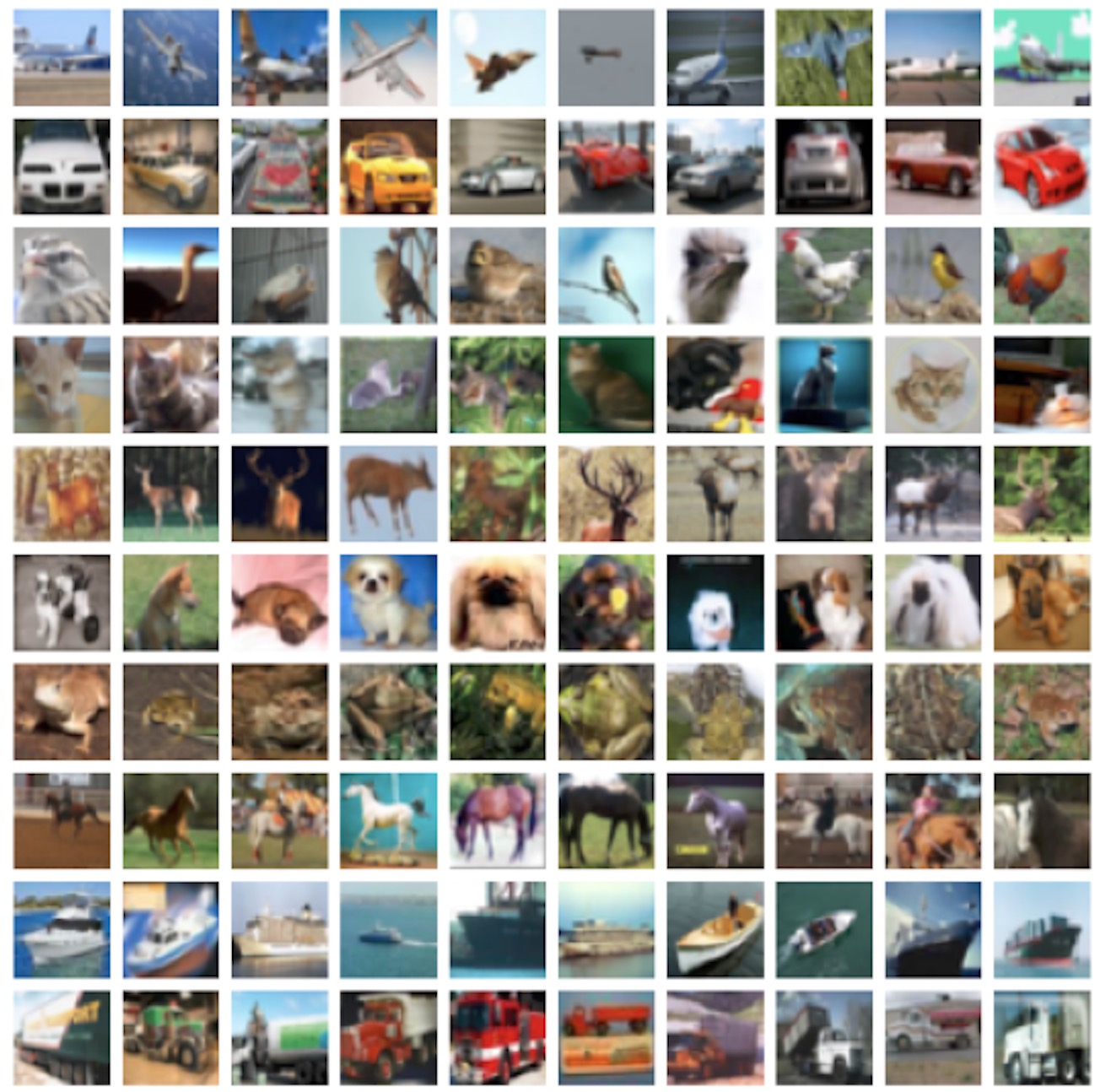} } 
	\hspace{0.3cm}		
	\subfloat[{\tt {\bf CIFAR-100 }}]{ \includegraphics[width=0.40\linewidth]{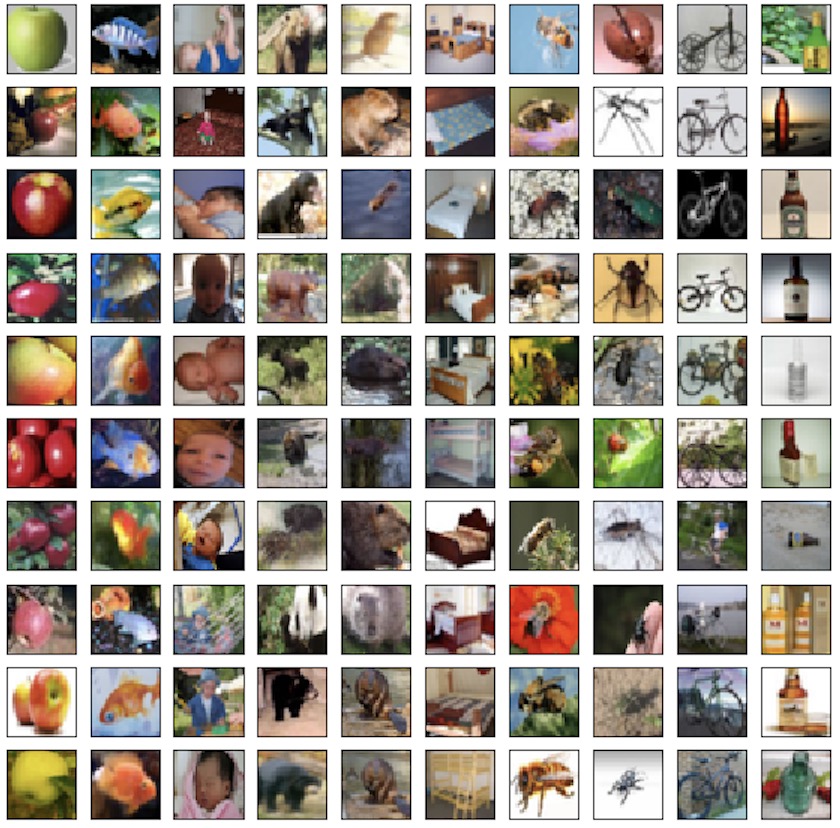} } 		

	\caption{ Sampled images from {\bf CIFAR-10} and {\bf CIFAR-100} training datasets.}
    \label{datasets}
\end{figure}

\subsection{Algorithms}
\label{3.1}
In all experiments, we compare our Algorithm \ref{alg:STAM} with the BinaryConnect(BC) \citep{CBD}, BinaryRelax(BR) \citep{YZLOQX} and proximal stochastic gradient descent (PSGD) \citep{CP, RVV} algorithms. The BC algorithm has become one of the most important algorithms for training quantized DNNs (such as Xnor-net). The BinaryRelax (BR) algorithm is a relaxed two-stage algorithm proposed in \citep{YZLOQX} and has been shown to be effective for training quantized DNNs. BC \citep{CBD} and PSGD \citep{CP, RVV}  trained DNNs by minimizing the following problem:
\begin{equation}\label{eq02}
\min_{\tilde{W}} L(\tilde{W}) + \mathcal{I}_{Q} (\tilde{W}),
\end{equation}
where the $L(\cdot)$ is the loss function of DNN and $\mathcal{I}_{Q}$ is the indicator function of the quantitative set $Q$ defined as 
\begin{equation}\label{eq01}
Q:=\{ {W}_{ij}  ~|~  | W_{i1} | = | W_{i2} | = \dots =| W_{in} |=s_i, W_{ij} \in s_i \times \{ -1, +1 \}, s_i \in \mathbb{R}^{+} \}
\end{equation}
with $i$ being the $i$-th layer of DNN and $n$ being the number of neurons at $i$-th layer. The magnitude $s_i$ in \eqref{eq01} has been calculated precisely by \citep{RORF}. To keep the paper self-contained, here we recall the calculation details of $s_i$.

The projection of floating-point weights onto quantized set $Q$ is to solve the following optimization problem:
\begin{equation}\label{0411}
\tilde{W}^* \in \arg\min_{\tilde{W} \in Q} \| \tilde{W} - U \|^2 := \mbox{Proj}_{Q} (U).
\end{equation}
According to the definition of $Q$, the projection problem \eqref{0411} can be reformulated as 
\begin{equation}\label{0412}
(s_i^*, Z^*) = \arg\min_{s_i, Z} \| s_i \cdot Z - U_i \|^2, ~~\textmd{s.t.}~~ Z \in \{-1, +1\}^n,
\end{equation}
where $U_i$ denotes the weights of the $i$-th layer. It has been shown in \citep{RORF} that the closed (exact) solution of \eqref{0412} can be obtained as follows:
\begin{equation}
s_i^* = \frac{\|vec(U_i)\|_1}{n}, \quad Z_{i,j} = \begin{cases} 1 \quad  &\textmd{if}~U_{ij} \geq 0, \\ -1  \quad &\textmd{otherwise}. \end{cases}
\end{equation}

When PSGD and BC algorithms are applied to problem \eqref{eq02}, the specific form is respectively given as  
\begin{equation}\label{alg:PSGD}
(\textmd{PSGD})~\begin{cases}
U^{t+1} \in U^t - \gamma \tilde{\nabla} L(U^t) ,\\
\tilde{W}^{t+1} \in  \mbox{Proj}_{Q}(U^{t+1}) 
\end{cases}
\end{equation}
and 
\begin{equation}\label{alg:BC}
(\textmd{BC})~\begin{cases}
U^{t+1} \in U^t - \gamma \tilde{\nabla} L(\tilde{W}^t) ,\\
\tilde{W}^{t+1} \in  \mbox{Proj}_{Q}(U^{t+1}).
\end{cases}
\end{equation}

Notice that the BinaryRelax (BR) \citep{YZLOQX}  is a two-stage algorithm. In the first stage,  BR algorithm minimizes the following problem
\begin{equation}\label{BR}
\min_{\tilde{W}} \frac{\lambda}{2} \textmd{dist} (\tilde{W}, Q)^2 + L(\tilde{W}),
\end{equation}
where $\lambda$ is the regularization parameter, $L(\cdot)$ is the loss function of DNN. In the second stage, BR algorithm solves the  problem \eqref{eq02} as BC and PSGD. The specific BR algorithm is given as
\begin{equation}\label{alg:BR}
(\textmd{BR})\begin{cases}
\tilde{W}^{t+1}  = \begin{cases}  \frac{\lambda_t Proj_{Q}(U^{t+1}) + U^{t+1} }{\lambda_t + 1} (\lambda_t = \rho \lambda_t, \rho >1)  \qquad &\textmd{if } t < T, \\ 
\\
Proj_{Q}(U^{t+1}) \qquad \qquad  \qquad \qquad \qquad \qquad  &\textmd{if } t \geq  T, \end{cases} \\
\\
U^{t+1} \in U^t - \gamma^t \tilde{\nabla} L(\tilde{W}^t).\\
\end{cases}
\end{equation}

When our  Algorithm \ref{alg:STAM} is applied  to train DNN with quantized weights,  we solve the following model 
\begin{equation}\label{stam}
\min_{W, \tilde{W}} \frac{\lambda}{2} \| W - \tilde{W} \|_{F}^2 + L_{W} (x, y) + \mathcal{I}_{\mathcal{Q}}(\tilde{W}), 
\end{equation}
where $L(\cdot)$ is the loss function of DNN and $\mathcal{I}_{Q}$ is the indicator function of the quantitative set as in \eqref{eq01}. The specific algorithm is presented as follows:

\begin{equation}\label{alg:stamm}
(\textmd{STAM})\begin{cases}
W^{t+1} \in \frac{(\beta - \lambda) W^t +\lambda \tilde{W}^t - \tilde{\nabla} L(W^T)}{\beta},\\
\\
U^{t+1} \in  \frac{\gamma \lambda W^{t+1} + X^t}{\gamma \lambda + 1},\\
\\
V^{t+1} \in  \mbox{Proj}_{Q}(2U^{t+1} - X^t),\\
\\
X^{t+1} = X^{t} + (V^{t+1} - U^{t+1}).
\end{cases}
\end{equation}

\subsection{CIFAR-10 dataset}
\label{3.2}
In this subsection, we train DNNs on the CIFAR-10 dataset \citep{K} using the four different algorithms presented in subsection \ref{3.1}. The CIFAR-10 dataset consists of 10 categories of $32 \times 32$ color images containing a total of $60,000$ images, with each category containing $6,000$ images (see Figure \ref{datasets} (a)). A set of $50000$ images are used as the training set and $10000$ as the test set. In this experiment, we compare our algorithm \eqref{alg:stamm} with BC, BR and PSGD algorithms. The parameters selection in all algorithms are given in Table \ref{para}.  In our algorithm, we choose parameters in three ways and denote as Our1,  Our2 and Our3 respectively. The details are given in Table \ref{para}.

In this experiment, we use $1000$ epochs to train DNNs with quantized weights for all algorithms. In the process of training DNN, we set the  batch-size as 128. In BR algorithm, we set the parameter $K=250$ to start the second phase. We finally compare the accuracy of all algorithms on the test and train sets of CIFAR-10. For BC and BR algorithms, we set their parameters as described in their papers \citep{CBD} and \citep{YZLOQX} respectively. 

We show the train and test accuracy in Figure \ref{acc_cifar}, and give the best test accuracy of all methods in Table \ref{cifar-res}. From Figure \ref{acc_cifar}, we can see that Our1 has similar behavior with BC and BR, all of which can achieve relatively high test accuracy with a small number of epochs. Furthermore, from Table \ref{para} we see that the parameters  $\lambda$ and $\gamma$ in Our1 are both very small, hence Our1 gets stuck  when reaching a local optimum.  Even so,  the test accuracy of Our1 is comparable to BR and better than BC. 

Thus we set the parameters $\gamma$ and  $\lambda$ in Our2 are relatively large when the number of epoch is small,  and then decrease $\gamma$ to ensure convergence. This treatment is similar to the gradual decay of learning rate in BC and BR algorithms. In Our3, we combine the above two parameters  in Our1 and Our2  so that the algorithm can  converge to a better local minimum. From Table \ref{cifar-res} we can  see  that both Our2 and Our3 have relatively good test accuracy especially for VGG-11 DNN, and Our3 has the best test accuracy among all these algorithms.  
\begin{table}[htbp]\footnotesize
\centering
\resizebox{\textwidth}{!}{
\begin{tabular}{|c|c|c|c|c|c|c|c|c|}
\hline
\multicolumn{1}{|c|}{Algorithm}&\multicolumn{5}{|c|}{ parameters }\\
\hline
 &$\gamma$ & $\lambda$ & $\rho$ & $\beta$ & weight decay  \\\cline{1-6}
PSGD& \makecell[c]{if ($0< epoch < 240$):\\ lr = 5e-4\\ else ($240\leq  epoch \leq 360$): \\ $epoch/20==0$: lr=lr*0.1}  &  $\times$  & $\times$   & $\times$  & 1e-7  \\\cline{1-6}
BC~&  \makecell[c]{if ($0< epoch < 240$):\\ lr = 5e-4\\ else ($240\leq epoch \leq 360$): \\ $epoch/20==0$: lr=lr*0.1}   & $\times$   &$\times$   &$\times$   & 1e-7  \\\cline{1-6}
 BR&  \makecell[c]{if ($0< epoch < 240$):\\ lr = 5e-4\\ else ($240\leq epoch \leq 360$): \\ $epoch/20==0$: lr=lr*0.1} & 1 & 1.02  &$\times$   & 1e-7  \\\cline{1-6}
Our1& 8 & 0.5& $\times$   &\makecell[c]{if epoch $\leq$ 80: 1e3\\ else: \\
max(0.9999$* \beta$, 920)} & $\times$   \\\cline{1-6}
Our2&\makecell[c]{if epoch $\leq$ 80: \\ $\gamma= 15 $\\ else: \\ $\gamma = \max(0.995*\gamma, 0.3)$}  & 4 & $\times$  & \makecell[c]{if epoch $\leq$ 80: 1e3\\ else:\\
 max(0.9999$* \beta$, 920)} &$\times$  \\\cline{1-6}
Our3 & \makecell[c]{if epoch $\leq$ 80: \\ $\gamma = 8$ \\ else: \\ $\gamma= 15$ (epoch=81),\\ $\gamma = \max(0.995*\gamma, 0.3)$ (epoch $\geq$ 81)}
&\makecell[c]{if epoch $\leq$ 80: \\ $\lambda= 0.5 $\\ else: $\lambda =4$}  
&$\times$   & \makecell[c]{if epoch $\leq$ 80: 1e3\\ else:\\ max(0.9999$* \beta$, 920)} & $\times$  \\\cline{1-6}
 
\end{tabular}
}
\caption{The selection of parameters for different methods.}\label{para}
\end{table}

\begin{figure}[!ht]
	\centering
	\subfloat[{\tt {\bf VGG-11}}]{ \includegraphics[width=0.45\linewidth]{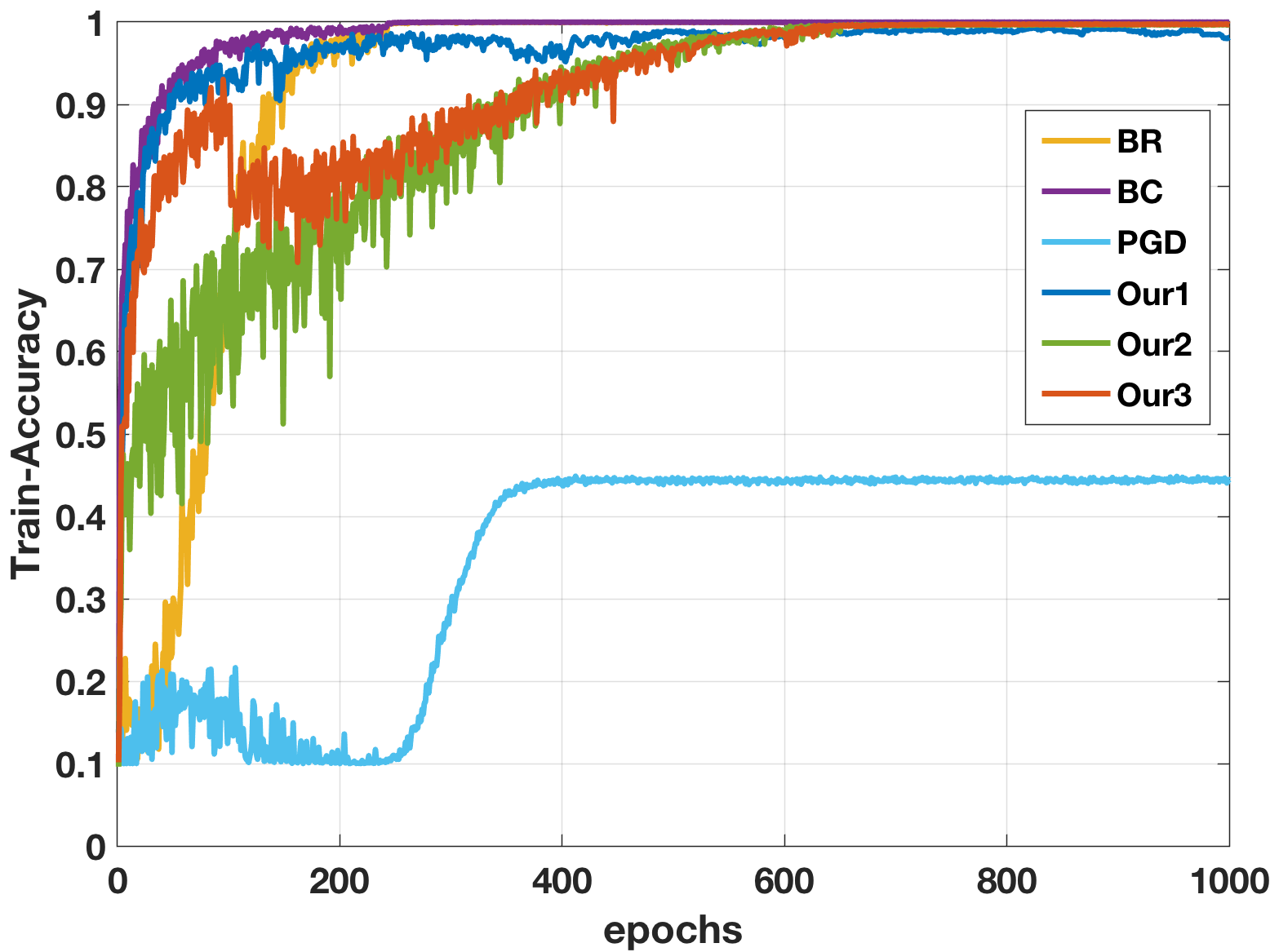} } 
	\hspace{0.1cm}		
	\subfloat[{\tt {\bf VGG-11 }}]{ \includegraphics[width=0.45\linewidth]{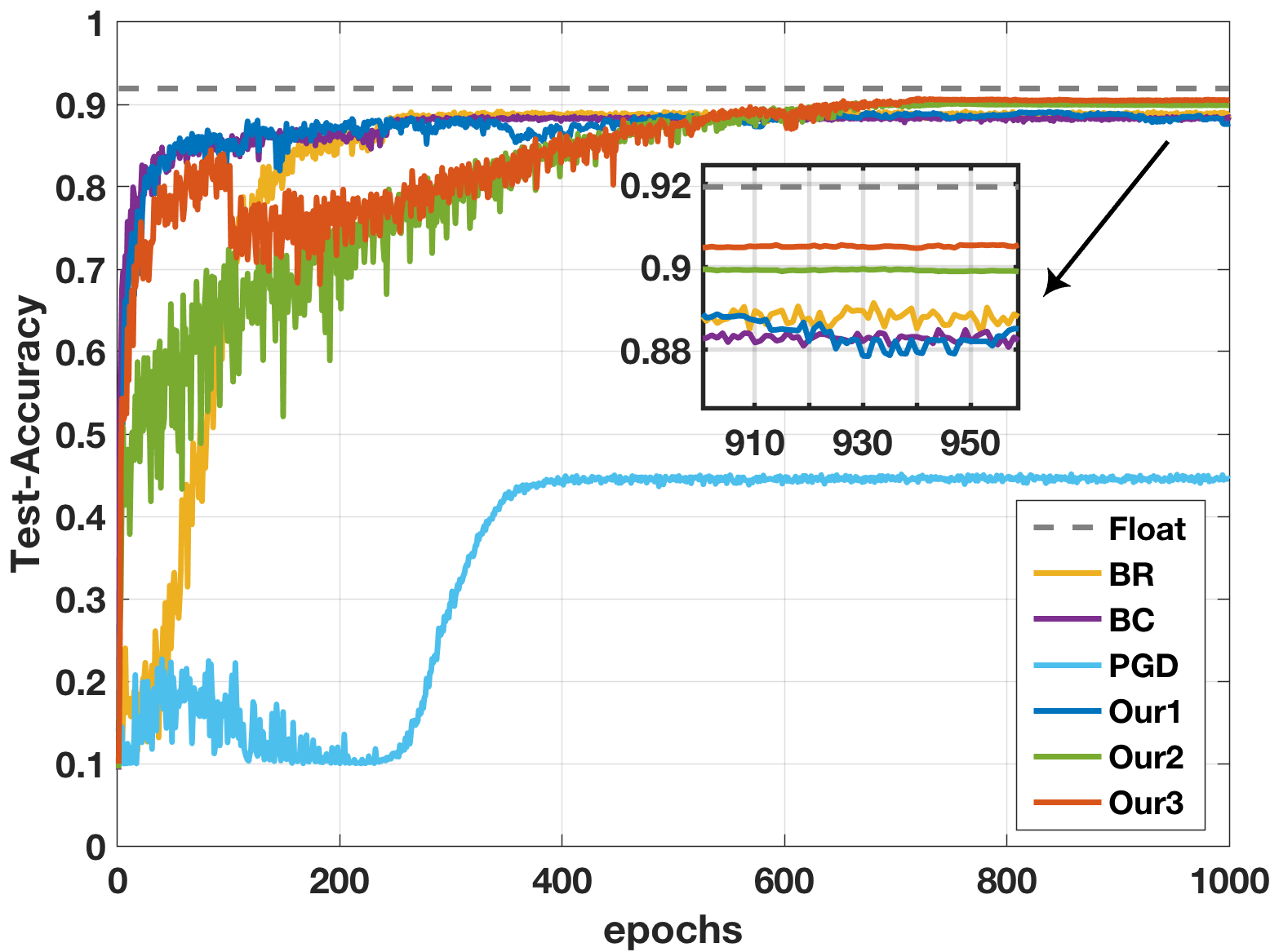} } 	\\[-2mm]	
	\hspace{0.1cm}	
	\subfloat[{\tt {\bf VGG-16 }}]{ \includegraphics[width=0.45\linewidth]{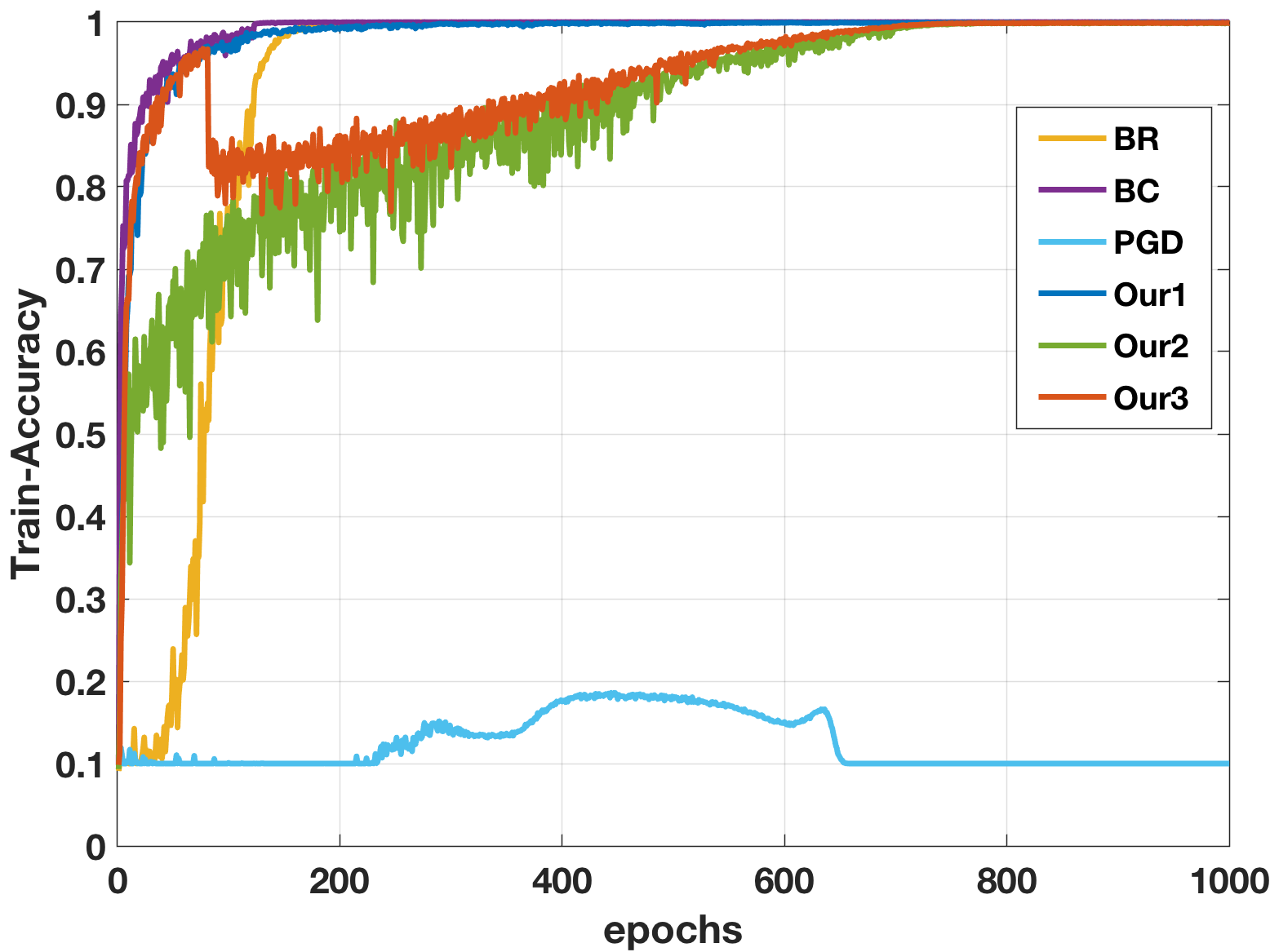} } 	
         \hspace{0.1cm}	
	\subfloat[{\tt {\bf VGG-16 }}]{ \includegraphics[width=0.45\linewidth]{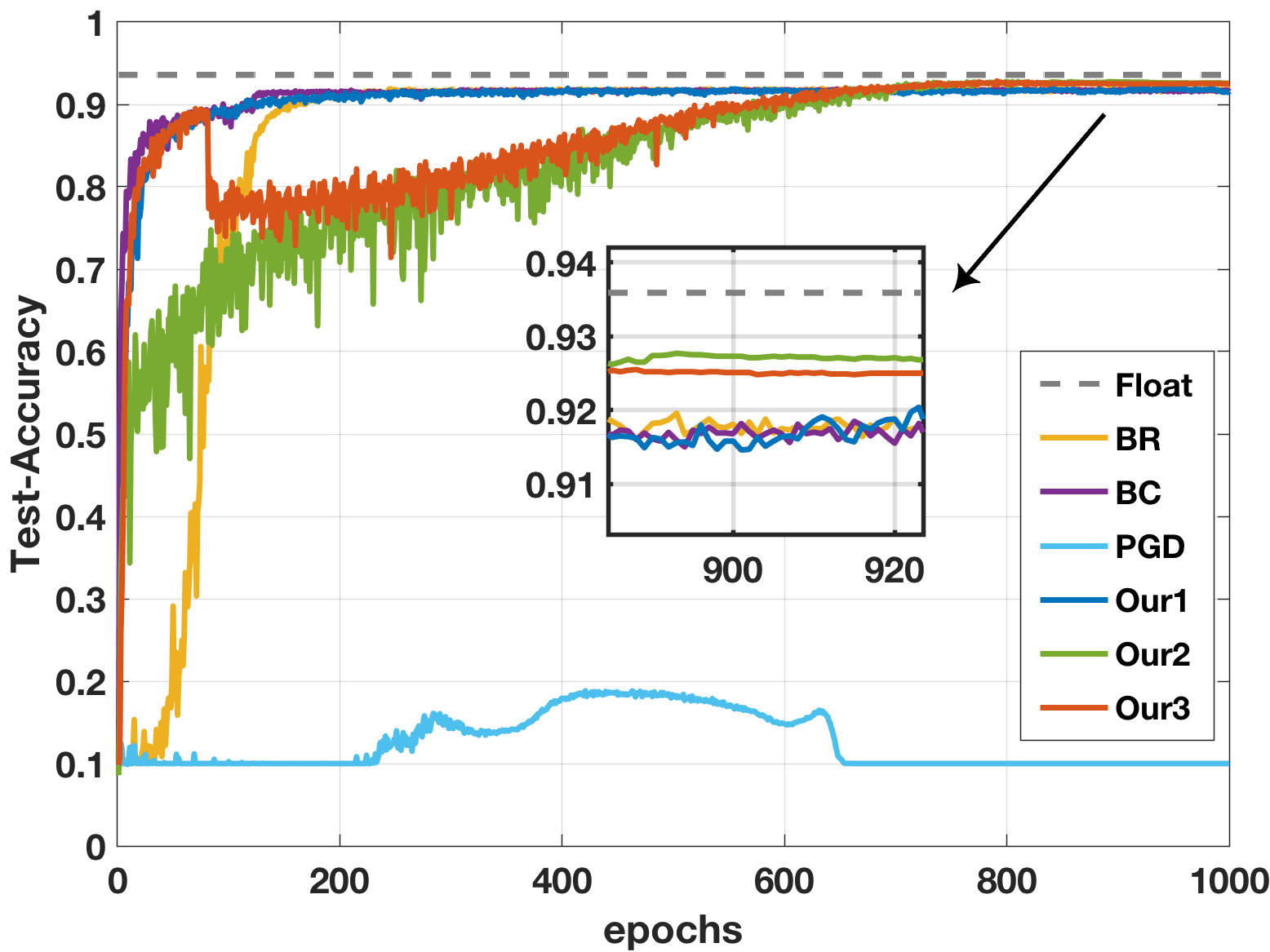} } \\[-2mm]	
		\hspace{0.1cm}	
	\subfloat[{\tt {\bf ResNet-18 }}]{ \includegraphics[width=0.45\linewidth]{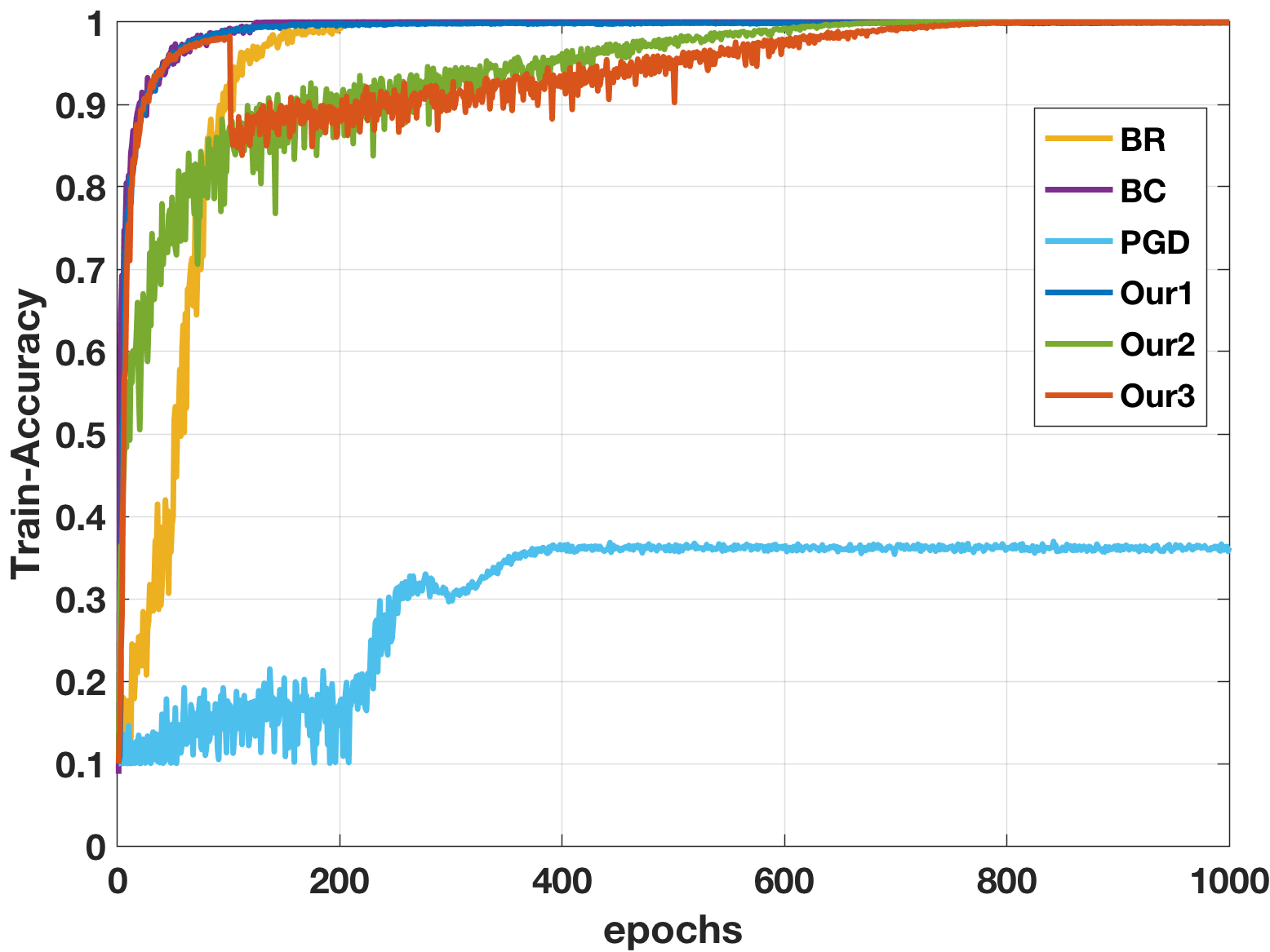} } 	
         \hspace{0.1cm}	
	\subfloat[{\tt {\bf ResNet-18 }}]{ \includegraphics[width=0.45\linewidth]{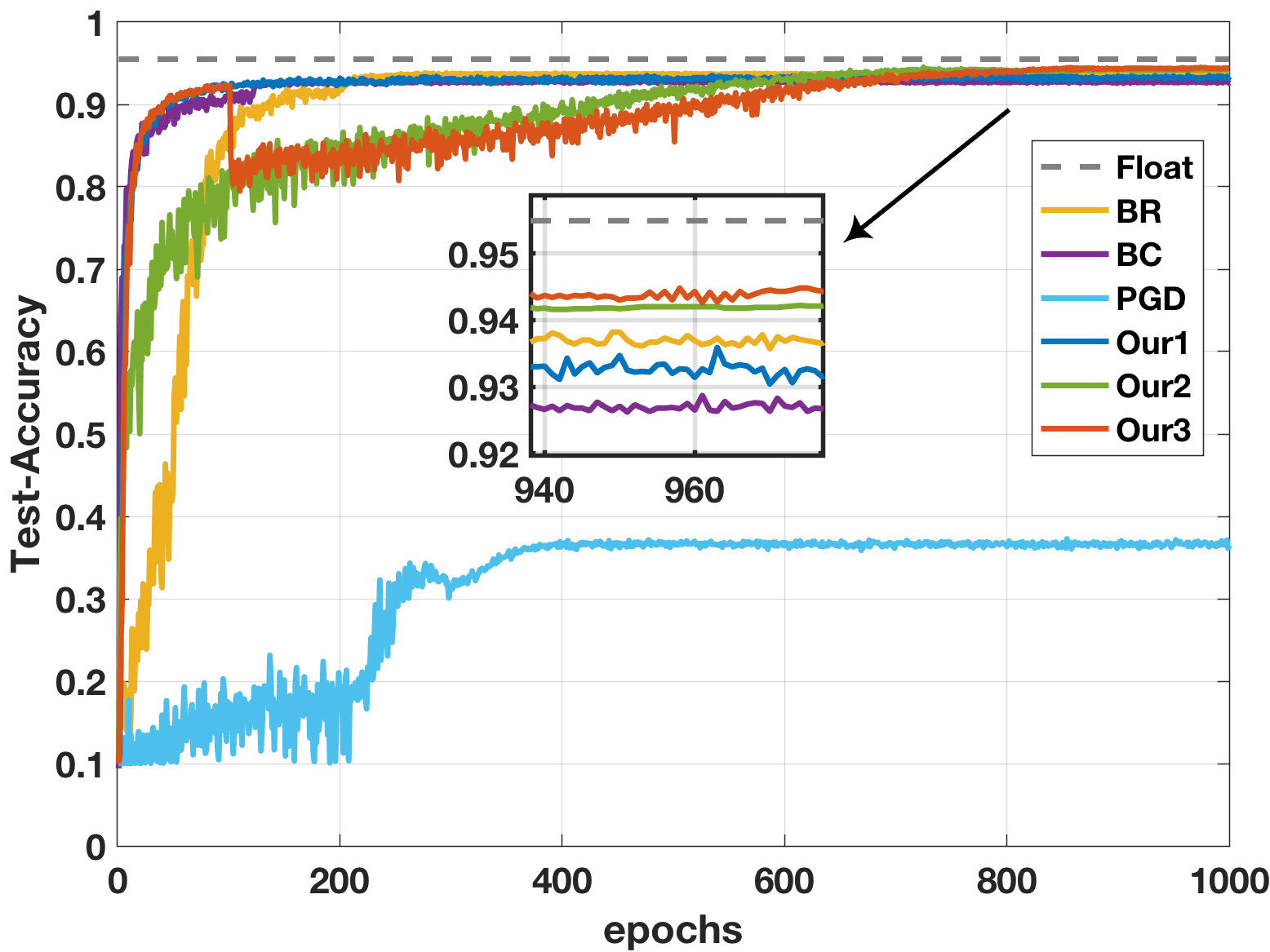} } 
	\caption{ Test accuracy and training accuracy for {\bf CIFAR-10} dataset.}
    \label{acc_cifar}
\end{figure}

\begin{table}[htbp]
\centering
\begin{tabular}{|c|c|c|c|c|c|c|c|}
\hline
DNN &  Float &PSGD &  BC & BR &Our1 & Our2 & Our3 \\ \cline{1-8}
VGG-11  &91.93 & 45.13 & 88.59 & 89.24 & 89.11 & 90.18 & {\bf 90.71} \\ \cline{1-8}
VGG-16 & 93.59  & 18.84  & 91.88 & 92.00 & 92.04 & 92.79 & {\bf 92.85}   \\\cline{1-8}
ResNet-18 & 95.49 & 37.27 & 92.96 & 93.86 & 93. 71 & {\bf 94.52} & 94.49  \\\cline{1-8}
\end{tabular}
\caption{The best test accuracy for different methods.}\label{cifar-res}
\end{table}

\subsection{CIFAR-100 dataset}
\label{sub:3.3}
In this subsection, we perform the test on CIFAR-100 dataset \citep{K}  containing $100$ classes, each of which contains $600$ images (see Figure \ref{datasets} (b)). These $600$ images are divided into $500$ training images and $100$ test images respectively.

\begin{table}[htbp]
\centering
\begin{tabular}{|c|c|c|c|c|c|c|c|}
\hline
DNN &  Float &  BC & BR & Our3 \\ \cline{1-5}
VGG-11  &70.43 & 62.37 & 64.01 & {\bf 64.69} \\ \cline{1-5}
VGG-16 & 73.55   & 69.33 & 70.20  & {\bf 71.26}   \\\cline{1-5}
ResNet-18 & 76.32 & 72.43 & 73.94 & {\bf 74.47}  \\\cline{1-5}
\end{tabular}
\caption{The best test accuracy for different methods.}\label{cifar100-res}
\end{table}

\begin{figure}[htbp]
	\centering
	\subfloat[{\tt {\bf VGG-11}}]{ \includegraphics[width=0.45\linewidth]{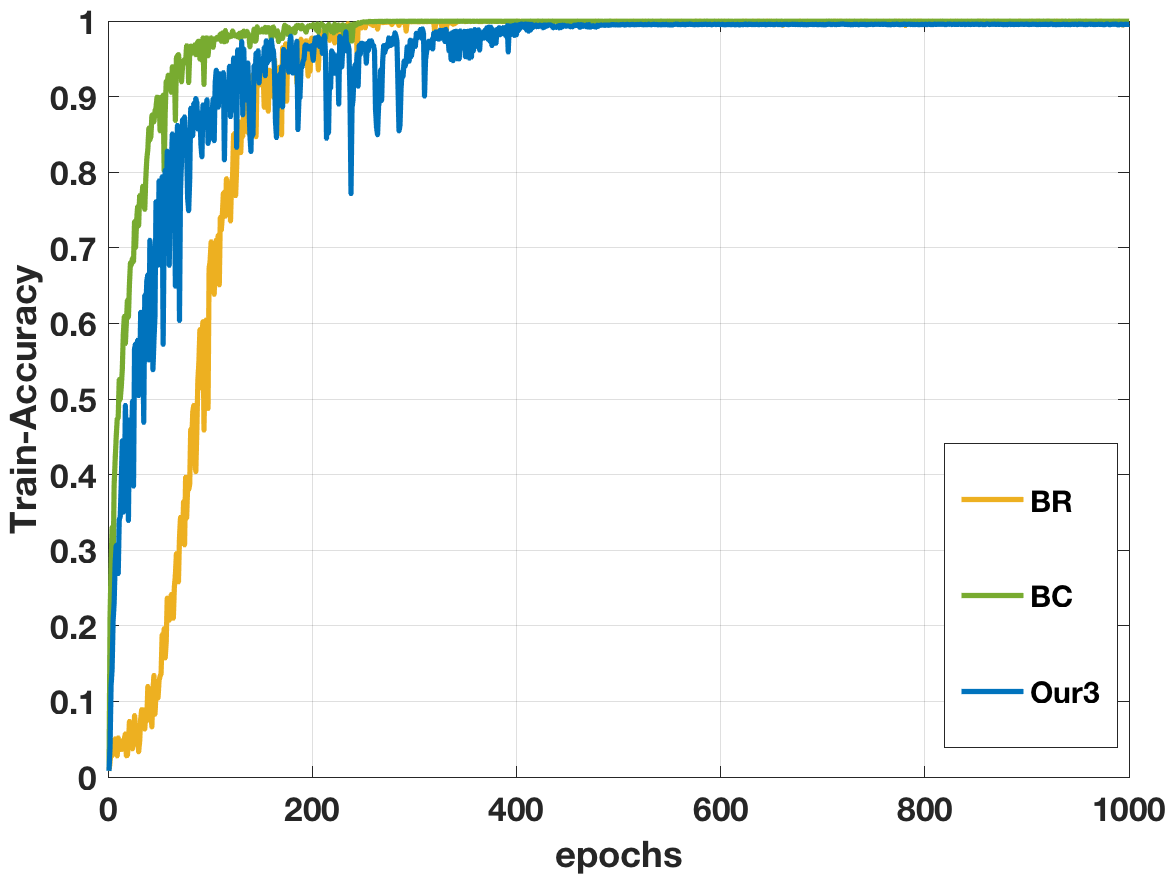} } 
	\hspace{0.1cm}		
	\subfloat[{\tt {\bf VGG-11 }}]{ \includegraphics[width=0.45\linewidth]{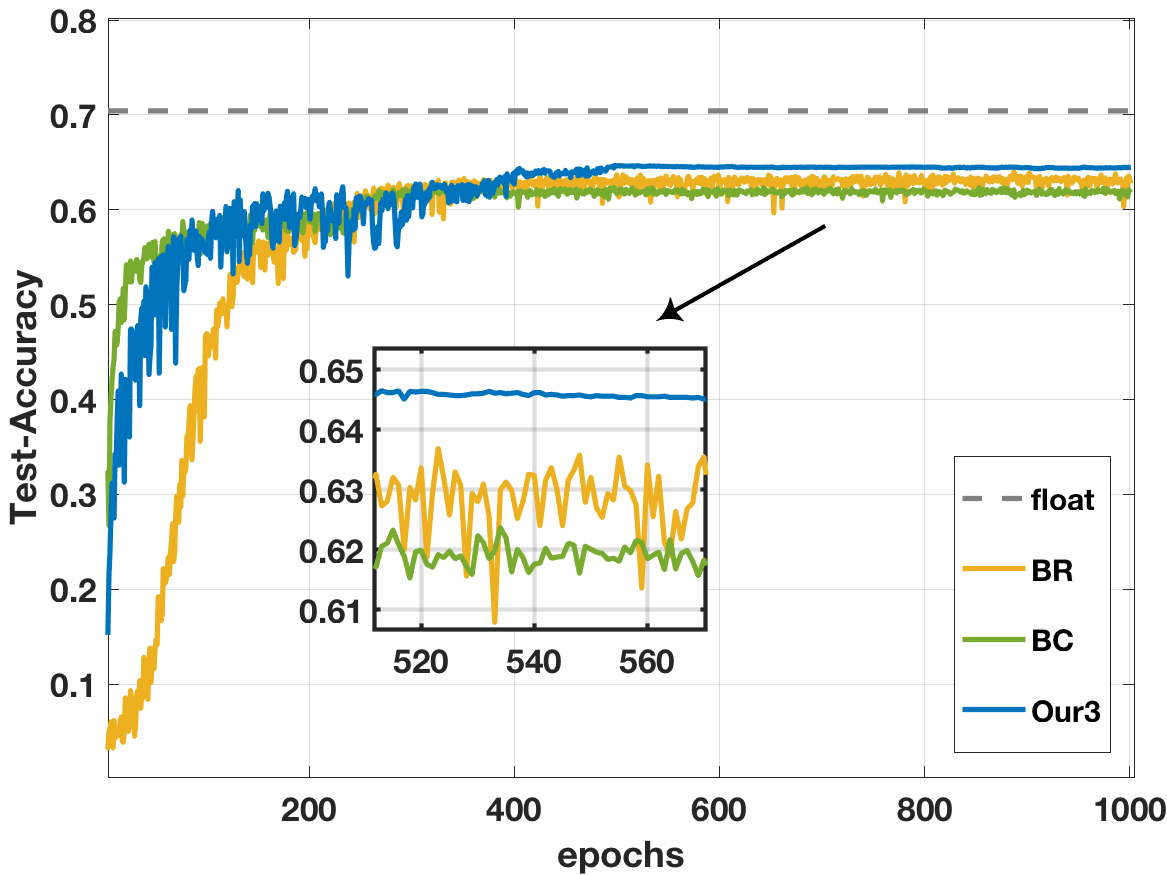} } 	\\[-2mm]	
	\hspace{0.1cm}	
	\subfloat[{\tt {\bf VGG-16 }}]{ \includegraphics[width=0.45\linewidth]{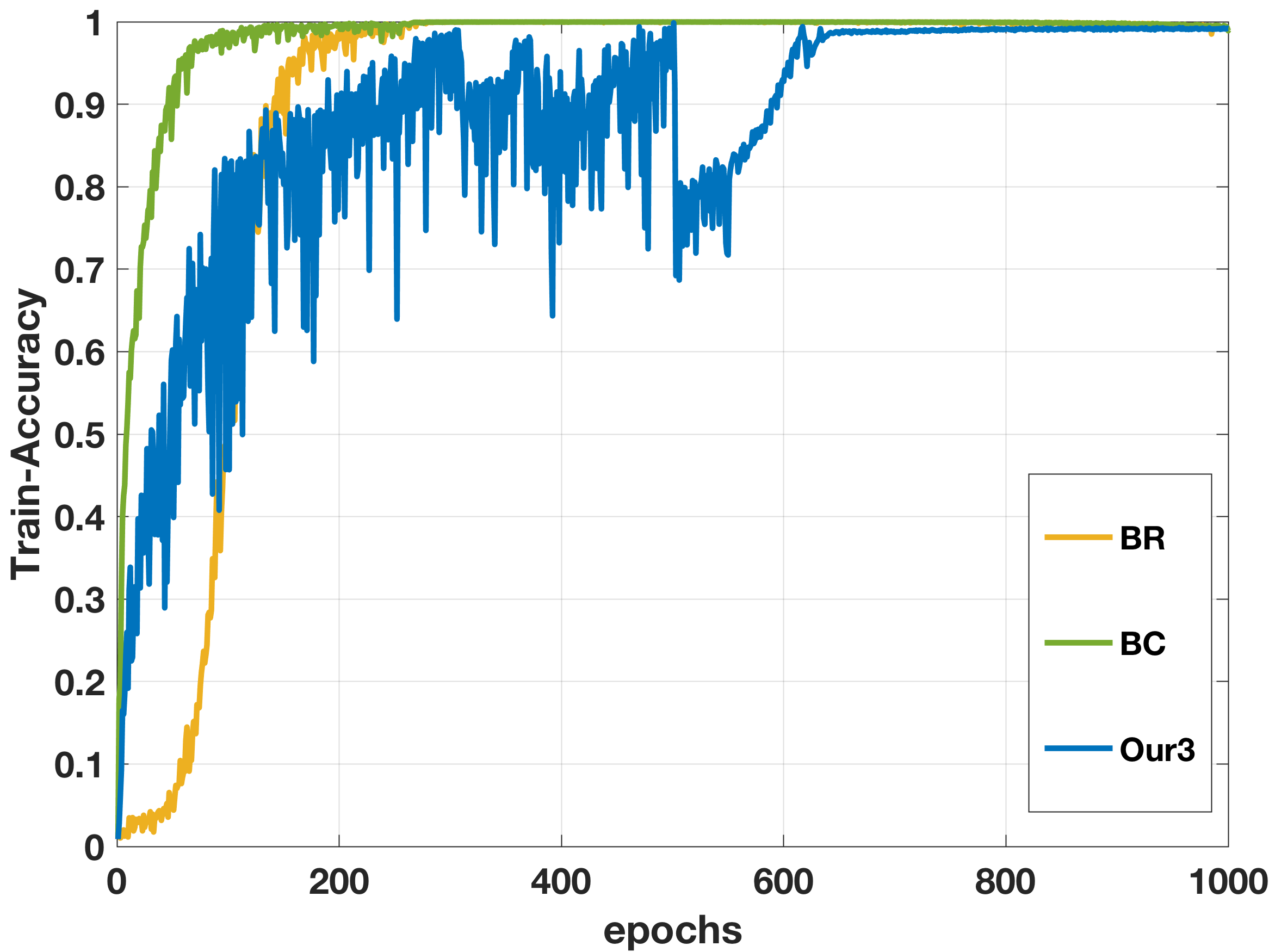} } 	
         \hspace{0.1cm}	
	\subfloat[{\tt {\bf VGG-16 }}]{ \includegraphics[width=0.45\linewidth]{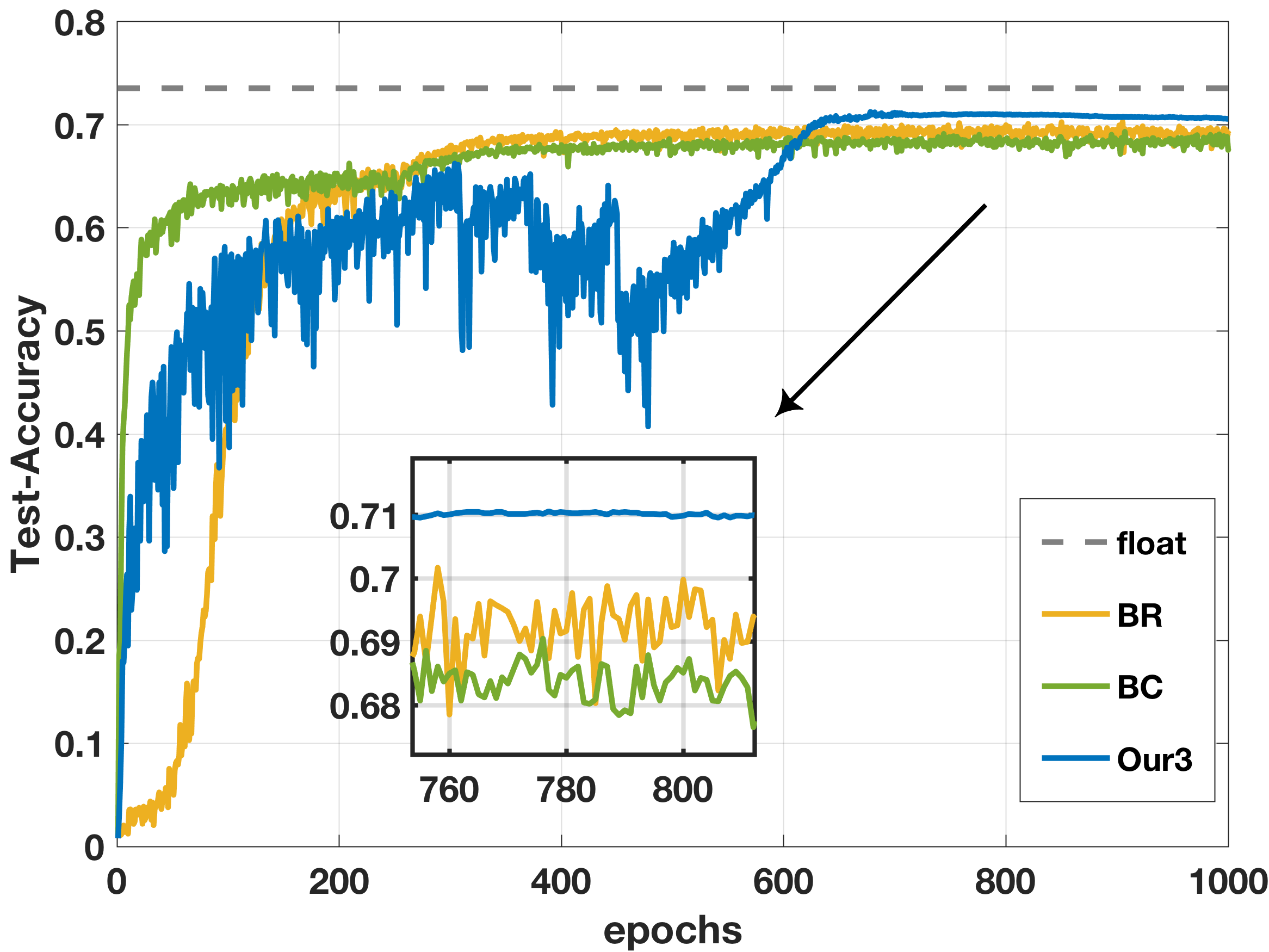} } \\[-2mm]	
		\hspace{0.1cm}	
	\subfloat[{\tt {\bf ResNet-18 }}]{ \includegraphics[width=0.45\linewidth]{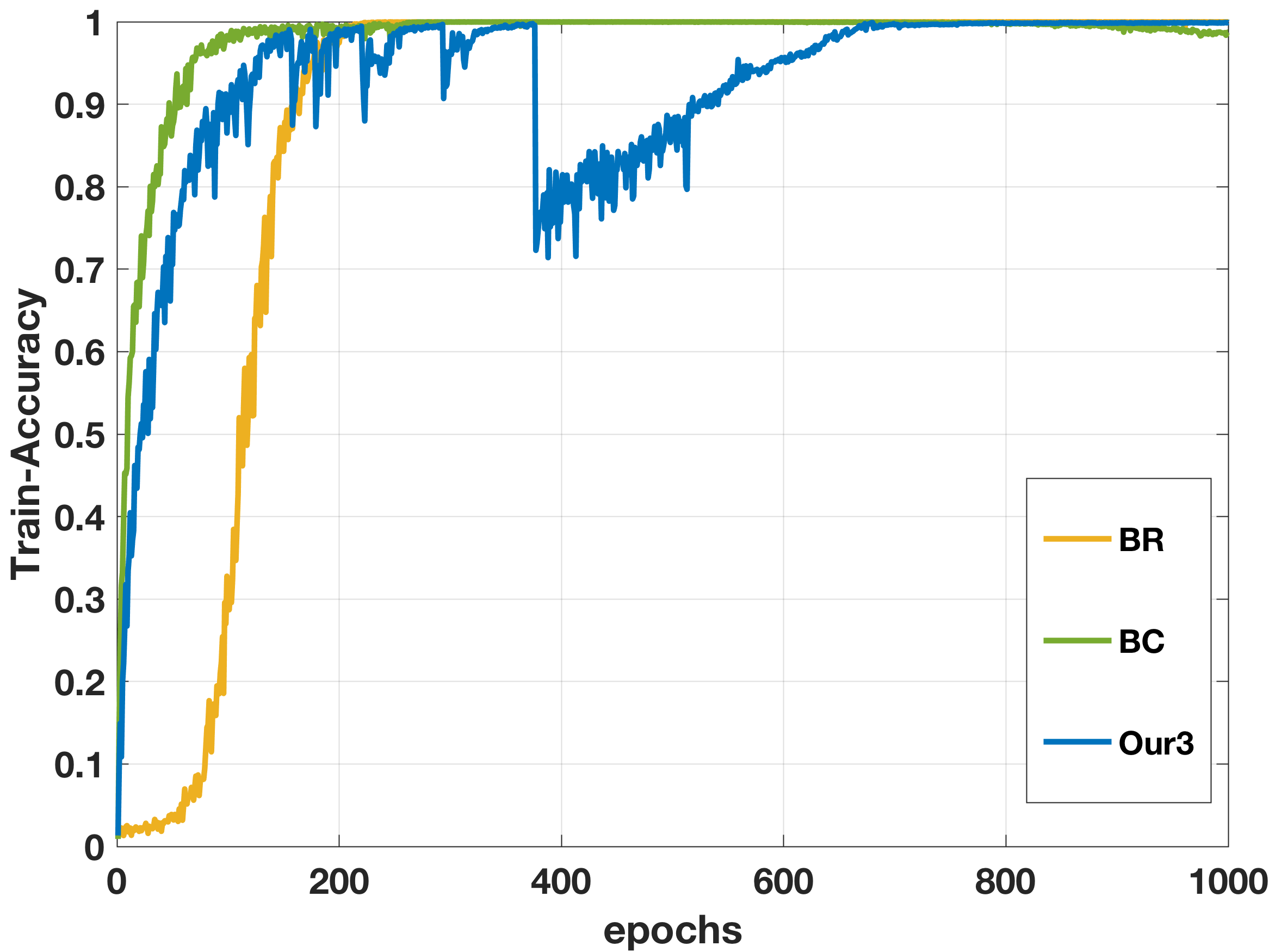} } 	
         \hspace{0.1cm}	
	\subfloat[{\tt {\bf ResNet-18 }}]{ \includegraphics[width=0.45\linewidth]{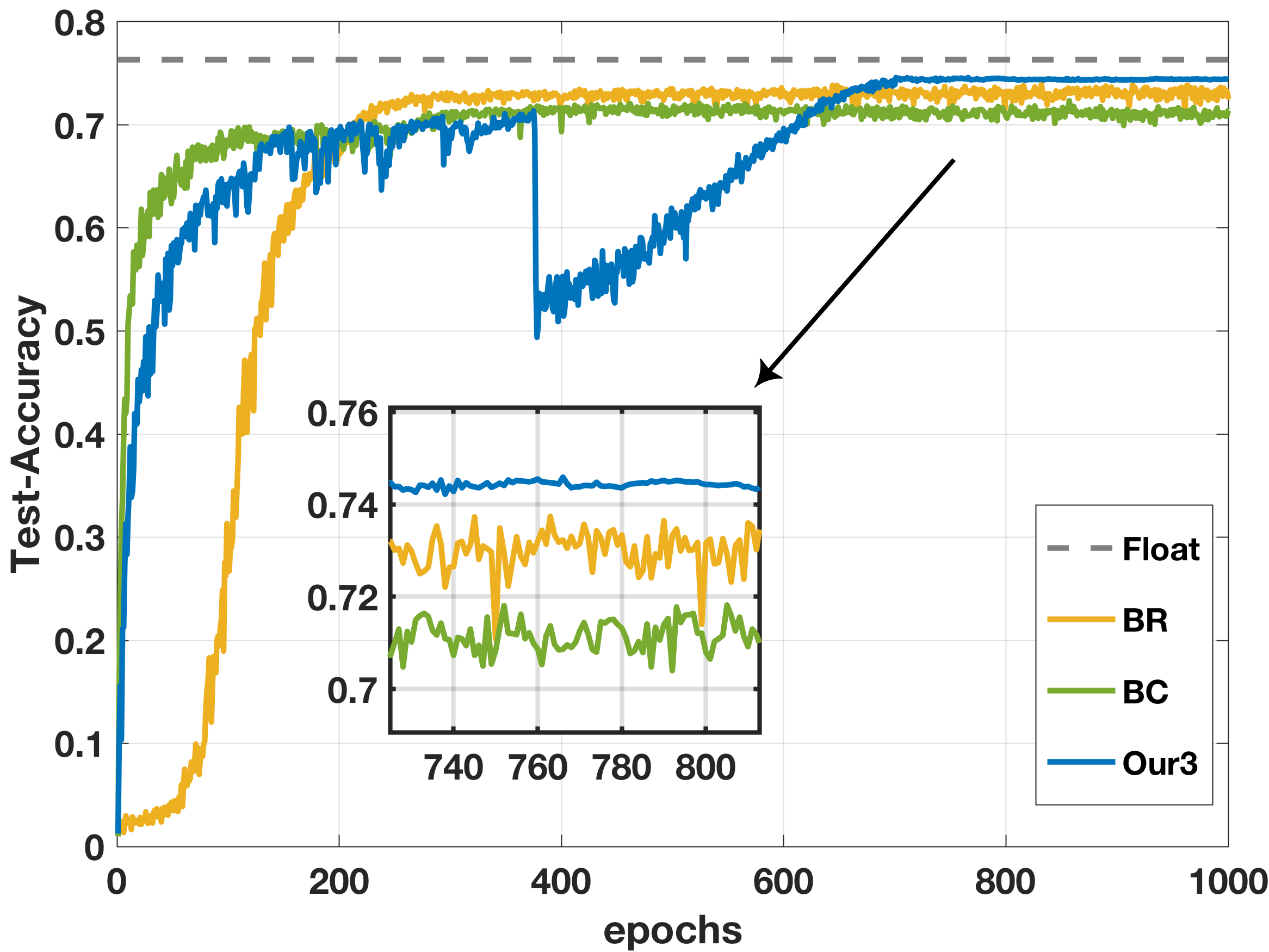} } 
	\caption{ Test accuracy and train accuracy for {\bf CIFAR-100} dataset.}
    \label{acc_cifar100}
\end{figure}

For CIFAR-100 dataset, the same three neural networks,  VGG-11, VGG-16 and ResNet-18 are investigated. The total epochs number are set as $1000$ and the batch-size is set to $128$ for all the neural networks. In the BR algorithm, we set the parameter $K = 200$ to start the second phase. For BC and BR algorithms,  their parameters are set as described in their papers \citep{CBD} and \citep{YZLOQX} respectively. In the previous subsection, we present  three sets of  parameters for our algorithm and it has been shown that the third set achieved the best performance. Therefore, we only show the results with the third set of parameter, Our3 with  a slight modification.   Specifically, in the first stage, we  use $350$ epochs and set $\gamma=10$, $\lambda=0.8$ and $\beta=1e3$. In the second stage, we set $\gamma = 3$ and $\lambda = 15$. Moreover, we decrease $\gamma$ and  $\beta$ by  $\gamma = \max\{0.99\gamma, 5e-3\}$ and $\beta = \max\{0.999\beta,920 \}$  in the second stage.

The compared results of the train and  test accuracy are shown in Figure \ref{acc_cifar100}. From Figure \ref{acc_cifar100},  it can be seen from this figure that the test accuracy of our algorithm is always higher than that  of the other two algorithms after 400 epochs. Similarly, for VGG-16 and ResNet-18, when $\gamma$ and $\lambda$ are reset in the second phase, the train accuracy and test accuracy are consistently higher than those of BC and BR algorithms. In particular, for VGG16, the test accuracy of our algorithm is $1\%$ higher than that of the other two algorithms.

\section{Conclusion}
\label{conclusion}
In this paper, we consider a three-block nonconvex minimizing problem including two separable terms with one being  a finite-sum, and a cross term. This type of structure is arisen naturally for quantized neural network training. We propose a general three-block splitting algorithm, namely STAM, and establish the convergence and convergence rate under a mild ES condition. The numerical experiments on diverse NN structures show that the proposed algorithm  is effective for both training and test accuracy.

\acks{ Xiaoqun Zhang was supported by National Natural Science Foundation of China (No. 12090024) and Shanghai Municipal Science and Technology Major Project 2021SHZDZX0102. Fengmiao Bian is also partly supported by outstanding PhD graduates development scholarship of Shanghai Jiao Tong University. We thank the Student Innovation Center at Shanghai Jiao Tong University for providing us the computing services.}

\vskip 0.2in



\begin{appendices}
\section{Proofs of Lemmas \ref{es-lemma}, \ref{ES-GH} and \ref{conv-lemma}.}
\label{app1}

Firstly, we recall the following lemma whose proof can be found in \citep{KR}. 
\begin{lemma}\label{Lip-lemma} 
Let the function $f$ be bounded from below by an infimum $f^{inf} \in \mathbb{R}$, differentiable, and $\nabla f$ is $L$-Lipschitz. Then for all $x \in \mathbb{R}^d$ we have
\begin{equation}
\| \nabla f(x) \|^2 \leq 2L \left( f(x) - f^{inf} \right).
\end{equation}
\end{lemma}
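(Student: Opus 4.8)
The plan is to derive the inequality from the standard ``descent lemma'' associated with an $L$-Lipschitz gradient, evaluated at the point obtained by taking a single gradient step from $x$. First I would establish the quadratic upper bound: since $\nabla f$ is $L$-Lipschitz on $\mathbb{R}^d$, for all $x,y \in \mathbb{R}^d$ one has
\[
f(y) \;\leq\; f(x) + \langle \nabla f(x),\, y - x\rangle + \frac{L}{2}\,\|y - x\|^2 .
\]
This follows by writing $f(y) - f(x) = \int_0^1 \langle \nabla f(x + t(y-x)),\, y - x\rangle\, dt$, subtracting and adding $\langle \nabla f(x), y-x\rangle$, and bounding the remaining integrand via Cauchy--Schwarz together with $\|\nabla f(x+t(y-x)) - \nabla f(x)\| \leq Lt\|y-x\|$.

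Next I would instantiate this bound at $y = x - \tfrac{1}{L}\nabla f(x)$ (the step size $1/L$ being well defined since $L>0$), which yields
\[
f\!\left(x - \tfrac{1}{L}\nabla f(x)\right) \;\leq\; f(x) - \frac{1}{L}\|\nabla f(x)\|^2 + \frac{L}{2}\cdot\frac{1}{L^2}\|\nabla f(x)\|^2 \;=\; f(x) - \frac{1}{2L}\|\nabla f(x)\|^2 .
\]
Since $f(z) \geq f^{inf}$ for every $z \in \mathbb{R}^d$, the left-hand side is at least $f^{inf}$, so $f^{inf} \leq f(x) - \frac{1}{2L}\|\nabla f(x)\|^2$; rearranging gives $\|\nabla f(x)\|^2 \leq 2L\,(f(x) - f^{inf})$, which is exactly the claim.

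There is essentially no obstacle in this argument: the only mildly delicate ingredient is the descent lemma, and even that is a one-line consequence of the fundamental theorem of calculus (the degenerate case $L=0$ forces $\nabla f \equiv 0$ and makes the inequality trivial). The statement is classical, and as noted in the excerpt it is already recorded in \citep{KR}. In the sequel this lemma will be applied with $f = G$, and analogously with the partial map $f = G + H(x,\cdot)$ once Assumption \ref{ass2} is in force, in order to convert gradient-norm quantities appearing in the convergence analysis into function-value gaps of the form $G(y) - G^{inf}$ that match the right-hand side of the ES inequality \eqref{422}.
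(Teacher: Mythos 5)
Your proof is correct and is the standard argument: the descent lemma applied at $y = x - \tfrac{1}{L}\nabla f(x)$ combined with boundedness from below. The paper itself does not reprove this lemma but defers to \citep{KR}, where exactly this argument is used, so your proposal matches the intended proof.
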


Now we give the proofs of Lemma \ref{es-lemma} and \ref{ES-GH}.  

\noindent{\bf Proof of Lemma \ref{es-lemma}. }  According to the definition of $\nabla G (\cdot)$ and using the convexity of the squared norm $\| \cdot \|^2$, we have
\begin{equation}\label{eq100}
\begin{aligned}
&\mathbb{E} \left[ \| \tilde{\nabla} G (y) + \nabla_y H(x, y) \|^2 \right]\\
&=\mathbb{E} \left[ \| \frac{1}{N} \sum_{i=1}^{N} v_i \nabla G_i(y) + \nabla_y H(x, y) \|^2 \right]\\
&\leq \mathbb{E} \left[ 2 \| \frac{1}{N} \sum_{i=1}^{N} v_i \nabla G_i (y) \|^2 + 2 \| \nabla_y H(x, y) \|^2 \right]\\
&\leq 2 \mathbb{E} \left\| \frac{1}{N} \sum_{i=1}^{N} v_i \nabla G_i (y)  \right\|^2 + 2 \mathbb{E} \| \nabla_y H(x, y) \|^2\\
&\leq  \frac{2}{N} \sum_{i=1}^{N} \mathbb{E} \| v_i \nabla G_i (y) \|^2 + 2 \mathbb{E} \| \nabla_y H(x,y) \|^2\\
&\leq \frac{2}{N} \sum_{i=1}^{N} \left[ \mathbb{E}\left[ v_i^2 \right] \| \nabla G_i(y) \|^2 + \|\nabla_y H(x,y)\|^2 \right].
\end{aligned}
\end{equation}
Furthermore, from Lemma \ref{Lip-lemma} we have
\begin{equation}
\| \nabla G_i (y) \|^2 \leq 2 L_1^i (G_i (y) - G_i^{inf}) ~~~\textmd{and}~~~ \| \nabla_y H(x, y) \|^2 \leq 2 L_3^* (H(x, y) - H^{inf}),
\end{equation}
where $G_i^{inf} = \inf_{y} G(y)$ and $H^{inf} = \inf_{x, y} H(x, y)$. Then we get
\begin{equation}\label{eq101}
\begin{aligned}
&\mathbb{E} \left[ \| \tilde{\nabla} G (y) + \nabla_y H(x, y) \|^2 \right]\\
&\leq \frac{4}{N}  \sum_{i=1}^{N} \left[ \mathbb{E}[v_i^2]  L_1^i (G_i (y) - G_i^{inf} ) +  L_3^* ( H(x, y) - H^{inf} ) \right]\\
&\leq \frac{4 \max_{i} (L_1^i \mathbb{E}[v_i^2] ) + L_3^*}{N} \sum_{i=1}^{N} \left( G_i (y) + H(x, y) - G_i^{inf} - H^{inf} \right)\\
&\leq \frac{4 \max_{i} (L_1^i \mathbb{E}[v_i^2] ) + L_3^*}{N} \sum_{i=1}^{N} \left( G_i (y) + H(x, y) - (G + H)^{inf} + (G+H)^{inf} - G_i^{inf} - H^{inf} \right)\\
&\leq \left(4 \max_{i} (L_1^i \mathbb{E}[v_i^2] + L_3^* \right) [ G(y) + H(x, y) - (G + H)^{inf} ] \\
&\quad + \frac{4 \max_{i} (L_1^i \mathbb{E}[v_i^2] ) + L_3^*}{N} \sum_{i=1}^{N} \left( (G+H)^{inf} - G_i^{inf} - H^{inf} \right).
\end{aligned}
\end{equation}
Note that $G (y) + H (x, y) = \frac{1}{N} \sum_{i=1}^{N} \left(G_i (y) + H(x,y) \right) \geq \frac{1}{N} \sum_{i=1}^{N}  \left( G_i ^{inf}+ H^{inf} \right)$, then $ \frac{1}{N} \sum_{i=1}^{N}  \left( G_i ^{inf}+ H^{inf} \right)$ is a lower bound of $G(y) + H(x,y)$, and then
\begin{equation}
\Delta^{inf} := \frac{1}{N} \sum_{i=1}^{N} \left( (G+H)^{inf} - G_i^{inf} - H^{inf} \right) = (G+H)^{inf} - \frac{1}{N} \sum_{i=1}^{N}  \left( G_i ^{inf}+ H^{inf} \right) \geq 0.
\end{equation}
\hfill\BlackBox \\

\noindent{\bf{Proof of Lemma \ref{ES-GH}.}} By  Cauchy inequality and ES inequality, we have
\begin{equation}
\begin{aligned}
&\mathbb{E} \left[ \| \tilde{\nabla} G(y) + \nabla_y H(x, y) \|^2 \right]\\
&\leq 2 \mathbb{E} \| \tilde{\nabla} G(y) \|^2 + 2 \| \nabla_y H(x, y) \|^2 \\
&\leq 4 A_0 \left( G(y) - G^{inf} \right) + 2 B_0 \| \nabla G(y) \|^2 + 2 C_0 + 2 \| \nabla_y H(x, y) \|^2.
\end{aligned}
\end{equation}
It follows from Lemma \ref{Lip-lemma} that
$$
\| \nabla G(y) \|^2 \leq 2 L_1 \left( G(y) - G^{inf} \right),
$$
$$
\| \nabla_y H(x, y) \|^2 \leq 2 L_3^* \left( H(x, y) - H^{inf} \right).
$$
Thus, we have
\begin{equation}
\begin{aligned}
&\mathbb{E} \left[ \| \tilde{\nabla} G(y) + \nabla_y H(x, y) \|^2 \right]\\
&\leq \left( 4 A_0 + 4 B_0 L_1 \right) \left(G(y) - G^{inf} \right) + 4 L_3^* \left( H(x, y) - H^{inf} \right) + 2 C_0\\
&\leq 2A \left[ G(y) + H(x, y) - (G + H)^{inf} \right] + 2A \left[ (G+H)^{inf} - G^{inf} - H^{inf} \right] + 2 C_0 \\
&=: 2A \left[ G(y) + H(x, y) - (G + H)^{inf} \right]  + C,
\end{aligned}
\end{equation}
where $A := \max \left( 2 A_0 +  2 B_0 L_1, 2 L_3^* \right)$ and $C: = 2A \left[ (G+H)^{inf} - G^{inf} - H^{inf} \right] + 2 C_0$. Thus we obtain the conclusion. 
\hfill\BlackBox \\

Next, we begin to prove Lemma \ref{conv-lemma}. Before proving Lemma \ref{conv-lemma}, we show an estimate for the gradient of objective function in model \eqref{model} with respect  to $y$-variable.

\begin{lemma}\label{par1}
Let Assumptions \ref{ass3} and \ref{ass2} be satisfied. Let $\beta, \gamma > 0$ be the parameters in Algorithm \ref{alg:STAM} and $A, C\geq 0$ be the constants in Lemma \ref{ES-GH}. Then, for any $M>0$ we have 
\begin{equation}\label{eq200}
\begin{aligned}
&\frac{1}{\beta} \eta_t + \frac{M}{2} \mathbb{E}\| y^{t+1} - y^t \|^2\\
&\leq \left( 1 + \frac{(L+M)A}{\beta^2}\right) \delta_t - \delta_{t+1} + \frac{(L+M)C}{2\beta^2} + \mathbb{E} \left[ H(x^{t+1}, y^{t+1}) - H(x^t, y^{t+1}) \right],
\end{aligned}
\end{equation}
where $\eta_t = \mathbb{E} \| \nabla G(y^t) + \nabla_y H(x^t, y^t) \|^2$ and $\delta_t = \mathbb{E} \left[ G(y^t) + H(x^t, y^t) - (G+H)^{inf} \right].$
\end{lemma}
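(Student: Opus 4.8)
The plan is to exploit the explicit form of the $y$-update \eqref{algy}, combine it with the standard descent inequality for $G(\cdot)+H(x^t,\cdot)$, and then pass to expectations using the unbiasedness of $\tilde{\nabla} G$ and the ES bound of Lemma \ref{ES-GH}.

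First I would record the optimality condition for \eqref{algy}: its objective is strongly convex in $y$ (as $\beta>0$), so the first-order condition reads $\tilde{\nabla} G(y^t)+\nabla_y H(x^t,y^t)+\beta(y^{t+1}-y^t)=0$, hence
\[
y^{t+1}-y^t=-\frac{1}{\beta}\big(\tilde{\nabla} G(y^t)+\nabla_y H(x^t,y^t)\big),\qquad \|y^{t+1}-y^t\|^2=\frac{1}{\beta^2}\big\|\tilde{\nabla} G(y^t)+\nabla_y H(x^t,y^t)\big\|^2 .
\]
Since $\nabla G$ is $L_1$-Lipschitz by (a1) and $\nabla_y H(x^t,\cdot)$ is $L_3^*$-Lipschitz by \eqref{H-ylip} and \eqref{lip-const}, the map $y\mapsto G(y)+H(x^t,y)$ has an $(L_1+L_3^*)$-Lipschitz gradient, and $L_1+L_3^*\leq L$ by \eqref{lip-const}; thus the descent lemma gives
\[
G(y^{t+1})+H(x^t,y^{t+1})\leq G(y^t)+H(x^t,y^t)+\langle \nabla G(y^t)+\nabla_y H(x^t,y^t),\,y^{t+1}-y^t\rangle+\frac{L}{2}\|y^{t+1}-y^t\|^2 .
\]

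Next I would substitute the above expression for $y^{t+1}-y^t$ and take expectations. Let $\mathcal{F}_t$ be the $\sigma$-field of the history through iteration $t$, so that $x^t,y^t$ are $\mathcal{F}_t$-measurable; by Definition \ref{ass1}, $\mathbb{E}[\tilde{\nabla} G(y^t)\mid\mathcal{F}_t]=\nabla G(y^t)$, and therefore
\[
\mathbb{E}\,\langle \nabla G(y^t)+\nabla_y H(x^t,y^t),\,y^{t+1}-y^t\rangle=-\frac{1}{\beta}\,\mathbb{E}\|\nabla G(y^t)+\nabla_y H(x^t,y^t)\|^2=-\frac{1}{\beta}\eta_t .
\]
Taking expectations in the descent inequality and subtracting $(G+H)^{inf}$ then yields
\[
\frac{1}{\beta}\eta_t\leq\delta_t-\mathbb{E}\big[G(y^{t+1})+H(x^t,y^{t+1})-(G+H)^{inf}\big]+\frac{L}{2}\,\mathbb{E}\|y^{t+1}-y^t\|^2 .
\]

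Finally I would add $\frac{M}{2}\mathbb{E}\|y^{t+1}-y^t\|^2$ to both sides, so the left-hand side becomes that of \eqref{eq200}; on the right, the term $\frac{L+M}{2}\mathbb{E}\|y^{t+1}-y^t\|^2=\frac{L+M}{2\beta^2}\mathbb{E}\|\tilde{\nabla} G(y^t)+\nabla_y H(x^t,y^t)\|^2$ is bounded, via Lemma \ref{ES-GH}, by $\frac{(L+M)A}{\beta^2}\delta_t+\frac{(L+M)C}{2\beta^2}$; and the identity $\mathbb{E}[G(y^{t+1})+H(x^t,y^{t+1})-(G+H)^{inf}]=\delta_{t+1}-\mathbb{E}[H(x^{t+1},y^{t+1})-H(x^t,y^{t+1})]$ moves the cross-iterate $H$-difference to the right, giving exactly \eqref{eq200}. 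I expect the only delicate step to be the conditioning bookkeeping in the third paragraph: one must keep straight that $x^t,y^t$ are $\mathcal{F}_t$-measurable while $y^{t+1}$---and hence $x^{t+1}$, which enters the final $H$-difference---depends on the step-$t$ sampling, so that the unbiasedness identity is applied to the inner product before the total expectation; everything else is routine algebra.
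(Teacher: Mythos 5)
Your proposal is correct and follows essentially the same route as the paper's proof: the optimality condition of the $y$-update, the descent lemma for $y\mapsto G(y)+H(x^t,y)$ with constant $L_1+L_3^*\leq L$, conditional unbiasedness to turn the inner product into $-\tfrac{1}{\beta}\eta_t$, the ES bound of Lemma \ref{ES-GH} on the second-moment term, and the final bookkeeping that moves the $H$-difference to the right-hand side. The only (immaterial) difference is that you add the $\tfrac{M}{2}\mathbb{E}\|y^{t+1}-y^t\|^2$ term after taking expectations rather than before.
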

\begin{proof}
By the optimality  condition of the first subproblem \eqref{algy}, we have
\begin{equation}\label{eq2}
y^{t+1} - y^t = \frac{- \tilde{\nabla} G(y^t) - \nabla_y H(x^t, y^t)}{\beta}.
\end{equation}
Using the L-smoothness of $G+H$ with respect to the variable $y$ in  Assumption \ref{ass2} $(a3)$, we get
\begin{equation}\label{eq3}
\begin{aligned}
&G(y^{t+1}) + H(x^t, y^{t+1})\\
&\leq G(y^t) + H(x^t, y^t) + \langle \nabla G(y^t) + \nabla_y H(x^t, y^t), y^{t+1} - y^t \rangle + \frac{L}{2} \| y^{t+1} - y^t \|^2.
\end{aligned}
\end{equation}
For any $M > 0$, adding up $\frac{M}{2} \| y^{t+1} - y^t \|^2$ on the both sides yields that 
\begin{equation}\label{eq4}
\begin{aligned}
&G(y^{t+1}) + H(x^t, y^{t+1}) + \frac{M}{2} \| y^{t+1} - y^t \|^2\\
&\leq G(y^t) + H(x^t, y^t) + \langle \nabla G(y^t) + \nabla_y H(x^t, y^t), y^{t+1} - y^t \rangle + \frac{L+M}{2} \| y^{t+1} - y^t \|^2\\
&\leq G(y^t) + H(x^t, y^t) + \langle \nabla G(y^t) + \nabla_y H(x^t, y^t), \frac{- \tilde{\nabla} G(y^t) - \nabla_y H(x^t, y^t)}{\beta} \rangle\\
&\quad ~+ \frac{L+M}{2} \| \frac{-\tilde{\nabla} G(y^t) - \nabla_y H(x^t, y^t)}{\beta}\|^2\\
&\leq G(y^t) + H(x^t, y^t) - \frac{1}{\beta} \langle  \nabla G(y^t) + \nabla_y H(x^t, y^t), \tilde{\nabla} G(y^t) + \nabla_y H(x^t, y^t) \rangle\\
&\quad ~+ \frac{L+M}{2\beta^2} \| \tilde{\nabla} G(y^t) + \nabla_y H(x^t, y^t)\|^2,
\end{aligned}
\end{equation}
where the second inequality follows from \eqref{eq2}. Taking expectations in \eqref{eq4} conditional on $y^t$, we obtain
\begin{equation}\label{eq5}
\begin{aligned}
&\mathbb{E} \left[ G(y^{t+1}) + H(x^t, y^{t+1}) + \frac{M}{2} \| y^{t+1} - y^t \|^2  ~ \bigg|  ~y^t \right]\\
&\leq G(y^t) + H(x^t, y^t) - \frac{1}{\beta} \| \nabla G(y^t) + \nabla_y H(x^t, y^t) \|^2 + \frac{L+M}{2\beta^2} \mathbb{E} \left[ \| \tilde{\nabla}G(y^t) + \nabla_y H(x^t, y^t)\|^2 \right].
\end{aligned}
\end{equation}
From Lemma \ref{ES-GH}, we have 
\begin{equation}\label{eq7}
\begin{aligned}
&\mathbb{E} \left[ G(y^{t+1}) + H(x^t, y^{t+1}) + \frac{M}{2} \| y^{t+1} - y^t \|^2 ~\bigg|~ y^t \right]\\
&\leq G(y^t) + H(x^t, y^t) - \frac{1}{\beta} \| \nabla G(y^t) + \nabla_y H(x^t, y^t) \|^2 \\
&\quad + \frac{(L+M)A}{\beta^2} \left[ G(y^t) + H(x^t, y^t) - (G+H)^{inf} \right] + \frac{(L+M)C}{2\beta^2},
\end{aligned}
\end{equation}
where $(G+H)^{inf}=\min_{x, y} G(y) + H(x, y)$. 
Subtracting $(G+H)^{inf}$ from both sides gives 
\begin{equation}\label{eq8}
\begin{aligned}
&\mathbb{E} \left[ G(y^{t+1}) + H(x^t, y^{t+1}) + \frac{M}{2} \| y^{t+1} - y^t \|^2 - (G+H)^{inf} ~\bigg|~ y^t \right]\\
&\leq \left(1 + \frac{(L+M)A}{\beta^2} \right) \left[ G(y^t) + H(x^t, y^t) - (G+H)^{inf} \right] + \frac{(L+M)C}{2\beta^2} \\
&\quad~ - \frac{1}{\beta}  \| \nabla G(y^t) + \nabla_y H(x^t, y^t) \|^2.
\end{aligned}
\end{equation}
Taking expectation again and applying the tower property, we obtain 
\begin{equation}\label{eq9}
\begin{aligned}
&\mathbb{E} \left[ G(y^{t+1}) + H(x^t, y^{t+1}) + \frac{M}{2} \| y^{t+1} - y^t \|^2 - (G+H)^{inf} \right]\\
&\leq \left( 1 + \frac{(L+M)A}{\beta^2}\right) \mathbb{E} \left[ G(y^t) + H(x^t, y^t) - (G+H)^{inf} \right] + \frac{(L+M)C}{2\beta^2} \\
&\quad - \frac{1}{\beta} \mathbb{E} \| \nabla G(y^t) + \nabla_y H(x^t, y^t) \|^2.
\end{aligned}
\end{equation}
Adding up $H(x^{t+1}, y^{t+1})$ on the both sides and rearranging,
\begin{equation}\label{eq10}
\begin{aligned}
&\mathbb{E} \left[ G(y^{t+1}) + H(x^{t+1}, y^{t+1}) - (G+H)^{inf} \right]  + \frac{1}{\beta} \mathbb{E} \| \nabla G(y^t) + \nabla_y H(x^t, y^t) \|^2\\
&\quad + \frac{M}{2} \mathbb{E} \| y^{t+1} - y^t \|^2\\
&\leq \left( 1 + \frac{(L+M)A }{\beta^2}\right) \mathbb{E} \left[ G(y^t) + H(x^t, y^t) -  (G+H)^{inf} \right] + \frac{(L+M)C}{2\beta^2}\\
&\qquad + \mathbb{E} \left[ H(x^{t+1}, y^{t+1}) - H(x^t, y^{t+1}) \right].
\end{aligned}
\end{equation}
Setting $\delta_{t+1} = \mathbb{E} \left[ G(y^{t+1}) + H(x^{t+1}, y^{t+1}) - (G+H)^{inf} \right]$ and $\eta_t = \mathbb{E} \| \nabla G(y^t) + \nabla_y H(x^t, y^t) \|^2$, we have
\begin{equation}\label{eq11}
\begin{aligned}
& \frac{1}{\beta} \eta_t + \frac{M}{2} \mathbb{E}\| y^{t+1} - y^t \|^2\\
&\leq \left( 1 + \frac{(L+M)A}{\beta^2}\right) \delta_t - \delta_{t+1} + \frac{(L+M)C}{2\beta^2} + \mathbb{E} \left[ H(x^{t+1}, y^{t+1}) - H(x^t, y^{t+1}) \right].
\end{aligned}
\end{equation}
\end{proof}

We next show an estimate for the gradient of objective function in model \eqref{model} with respect  to $x$-variable.

\begin{lemma}\label{par2}
Let Assumption \ref{ass2} be satisfied. Suppose that the parameter $\gamma > 0$ is chosen such that 
\[
\mathcal{K}_1 := -  \frac{-3 + 5\gamma l + 2\gamma}{2\gamma} - \frac{5 (1 + \gamma L_2^*)^2}{4 \gamma}  > 0. 
\]Then
\begin{equation}\label{eq250}
\begin{aligned}
& \mathbb{E} \left(\frac{\| z^t - z^{t-1} \|}{\gamma}\right)^2 \\
& \leq \frac{\mathcal{K}_2}{ \mathcal{K}_1} \bigg( \mathbb{E} \left[ H(x^t, y^{t+1}) - H(x^{t+1}, y^{t+1}) \right]  + \mathbb{E} \left[ \mathcal{M}_t - \mathcal{M}_{t+1} \right] + \mathcal{K}_3 \mathbb{E} \| y^{t+1} - y^t \|^2 \bigg).
\end{aligned}
\end{equation}
where 
$$
\mathcal{M}_t := \mathcal{M} (x^t, u^t, z^t) = F(u^{t}) + \frac{1}{2\gamma} \| 2x^{t} - u^{t} - z^{t} \|^2 - \frac{1}{2\gamma}\| x^{t} - z^{t} \|^2 - \frac{1}{\gamma} \| u^{t} - x^{t} \|^2 
$$
and 
$$
\mathcal{K}_2 := \frac{5(1 + \gamma L_2^*)^2}{4\gamma^2},  ~ ~ \mathcal{K}_3 := L_4^* ( 1 + 5\gamma L_4^* ) + \frac{5 \mathcal{K}_1 (L_4^*)^2}{\mathcal{K}_2}. 
$$

\end{lemma}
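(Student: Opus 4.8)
The plan is to read \eqref{eq250} as a sufficient-decrease estimate for the merit function $\mathcal{M}_t$ along the Douglas--Rachford sub-iteration \eqref{algx}--\eqref{algz}, in which the passage from the outer variable $y^t$ to $y^{t+1}$ is treated as a controlled perturbation of the proximal step for $H$. Concretely, I would adapt the nonconvex Douglas--Rachford merit-function analysis of \citep{LP,BZ}, carrying along the extra terms produced by the fact that $x^{t+1}$ is the proximal point of $H(\cdot,y^{t+1})$ whereas $x^t$ was the proximal point of $H(\cdot,y^t)$.

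First I would record the first-order optimality conditions of \eqref{algx} and \eqref{algu}, namely $\nabla_x H(x^{t+1},y^{t+1}) = \frac1\gamma(z^t-x^{t+1})$ and $\frac1\gamma(2x^{t+1}-z^t-u^{t+1}) \in \partial F(u^{t+1})$. Combining the latter with the update \eqref{algz}, i.e. $z^{t+1}=z^t+u^{t+1}-x^{t+1}$, rewrites these as $\frac1\gamma(x^{t+1}-z^{t+1})\in\partial F(u^{t+1})$ and $u^{t+1}-x^{t+1}=z^{t+1}-z^t$; applying the same relations one outer step earlier gives $\nabla_x H(x^t,y^t)=\frac1\gamma(z^t-u^t)$ and $z^t-z^{t-1}=u^t-x^t$. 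In particular the left-hand side of \eqref{eq250} equals $\mathbb{E}\|u^t-x^t\|^2/\gamma^2$, which is precisely the quantity the merit function is built to dissipate.

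Next I would estimate $\mathcal{M}_{t+1}-\mathcal{M}_t$ from above. Testing the minimality of $u^{t+1}$ in \eqref{algu} against $u^t$ gives $F(u^{t+1})-F(u^t)\le\frac1{2\gamma}\big(\|u^t-2x^{t+1}+z^t\|^2-\|u^{t+1}-2x^{t+1}+z^t\|^2\big)$, and the purely quadratic part of $\mathcal{M}_{t+1}-\mathcal{M}_t$ is expanded by the polarization identity. To link $x^{t+1}$ to $x^t$ I would use that $H(\cdot,y^{t+1})+\frac l2\|\cdot\|^2$ is convex, so that the $x$-subproblem objective $H(\cdot,y^{t+1})+\frac1{2\gamma}\|\cdot-z^t\|^2$ is $(\frac1\gamma-l)$-strongly convex — the modulus being positive since $\mathcal{K}_1>0$ forces $\gamma l<1$ — and compare its value at $x^t$ with that at its minimizer $x^{t+1}$; this comparison is exactly what generates the cross term $H(x^t,y^{t+1})-H(x^{t+1},y^{t+1})$ appearing on the right-hand side of \eqref{eq250}. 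The discrepancy between $x^t$ and $\mathrm{prox}_{\gamma H(\cdot,y^{t+1})}(z^{t-1})$ — the point that would have coincided with $x^t$ had $y$ not changed between the two outer steps — is controlled through the Lipschitz bound $\|\nabla_x H(x^t,y^{t+1})-\nabla_x H(x^t,y^t)\|\le L_4^*\|y^{t+1}-y^t\|$ from \eqref{Hx-ylip} and \eqref{lip-const}, while the remaining gradient increments of $H$ in the $x$-variable are absorbed via the $L_2^*$-Lipschitz bound \eqref{H-xlip}.

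The main obstacle is the bookkeeping of the many Young's-inequality splittings: the weights must be chosen so that the net coefficient of $\|u^t-x^t\|^2=\|z^t-z^{t-1}\|^2$ is exactly $\frac{\mathcal{K}_1}{\mathcal{K}_2}\gamma^{-2}$ with $\mathcal{K}_1>0$ guaranteed by the stated threshold on $\gamma$, while every other contribution either telescopes into $\mathcal{M}_t-\mathcal{M}_{t+1}$ together with the cross term, or collects into $\mathcal{K}_3\|y^{t+1}-y^t\|^2$; the constant $\mathcal{K}_2=\frac{5(1+\gamma L_2^*)^2}{4\gamma^2}$ enters precisely when the strong-convexity term $\|x^{t+1}-x^t\|^2$ is converted into $\|z^t-z^{t-1}\|^2$ through the optimality relations above and the $L_2^*$-Lipschitz estimate. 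Once this deterministic inequality is in place, taking conditional expectations (the $(x,u,z)$-updates are deterministic given $y^{t+1}$), then full expectations, and finally multiplying through by $\mathcal{K}_2/\mathcal{K}_1$ yields \eqref{eq250}.
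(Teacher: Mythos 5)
Your plan is correct and matches the paper's own argument essentially step for step: strong convexity of the $x$-subproblem (modulus $\tfrac1\gamma - l>0$, guaranteed by $\mathcal{K}_1>0$) to produce the $H(x^t,y^{t+1})-H(x^{t+1},y^{t+1})$ term, minimality of $u^{t+1}$ tested against $u^t$, polarization of the quadratics to telescope into $\mathcal{M}_t-\mathcal{M}_{t+1}$, the identity $z^t-z^{t-1}=u^t-x^t$, the $L_4^*$-bound on the $y$-perturbation via monotonicity of $\nabla_x[H(\cdot,y^{t+1})+\tfrac l2\|\cdot\|^2]$, and the $L_2^*$-Lipschitz conversion of $\|x^{t+1}-x^t\|^2$ into $\|z^t-z^{t-1}\|^2$ that produces $\mathcal{K}_2$. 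No gaps; only the routine Young's-inequality bookkeeping you already flagged remains.
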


\begin{proof}
Since $x^{t+1}$ is the minimizer of subproblem \eqref{algx},  by the strongly convex of $H(\cdot, y^{t+1}) + \frac{1}{2\gamma} \| \cdot - z^t \|^2$  we have 
\begin{equation}\label{eq260}
\begin{aligned}
H(x^{t+1}, y^{t+1}) + \frac{1}{2\gamma} \| z^t - x^{t+1} \|^2 \leq H(x^{t}, y^{t+1}) + \frac{1}{2\gamma} \| z^t - x^t \|^2 - \frac{1}{2}\left(\frac{1}{\gamma} - l \right) \| x^{t+1} - x^t \|^2.
\end{aligned}
\end{equation}
On the other hand, using the fact that  $u^{t+1}$ is the minimizer of subproblem \eqref{algu}, we get 
\begin{equation}\label{eq261}
\begin{aligned}
&F(u^{t+1}) + \frac{1}{2\gamma} \| 2x^{t+1} - u^{t+1} - z^t \|^2\leq F(u^t) + \frac{1}{2\gamma} \| 2x^{t+1} - u^t - z^t \|^2.
\end{aligned}
\end{equation}
Summing up \eqref{eq260} and \eqref{eq261}, we have
\begin{equation}\label{eq262}
\begin{aligned}
&H(x^{t+1}, y^{t+1}) + F(u^{t+1}) + \frac{1}{2\gamma} \| 2x^{t+1} - u^{t+1} - z^t \|^2 + \frac{1}{2\gamma} \| z^t - x^{t+1} \|^2\\
&\leq H(x^t, y^{t+1}) + F(u^t) + \frac{1}{2\gamma} \| 2x^{t+1} - u^t - z^t \|^2 + \frac{1}{2\gamma} \| z^t - x^t \|^2 - \frac{1}{2}\left(\frac{1}{\gamma} - l \right) \| x^{t+1} - x^t \|^2.
\end{aligned}
\end{equation}
Notice that we can rewrite
\begin{equation}\label{eq263}
\begin{aligned}
&\| 2 x^{t+1} - u^{t+1} - z^t \|^2 - \| 2 x^{t+1} - u^{t+1} - z^{t+1} \|^2\\
&= 2 \langle 2 x^{t+1} - u^{t+1} - z^{t+1}, z^{t+1} - z^t \rangle + \| z^{t+1} - z^t \|^2\\
&=  2 \langle 2 x^{t+1} - u^{t+1}-(u^{t+1} - x^{t+1}) - z^{t}, z^{t+1} - z^t \rangle  + \| z^{t+1} - z^t \|^2\\
&=  - 4 \| z^{t+1} - z^t \|^2 + 2 \langle x^{t+1}- z^{t}, z^{t+1} - z^t \rangle + \| z^{t+1} - z^t \|^2\\
&=  - 2 \| z^{t+1} - z^t \|^2  - \| x^{t+1} - z^{t+1} \|^2 + \| x^{t+1} - z^t \|^2,
\end{aligned}
\end{equation}
where  the subproblem \eqref{algz} is used in the second and third equalities, and the fact that  $2\langle a, b \rangle = - \left( \| a - b \|^2 - \| a \|^2 - \| b \|^2\right)$ is used in the last equality.  Combining  \eqref{eq263} with \eqref{eq262} and using the subproblem \eqref{algz}, we obtain 
\begin{equation}\label{eq264}
\begin{aligned}
&H(x^{t+1}, y^{t+1}) + F(u^{t+1}) + \frac{1}{2\gamma} \| 2x^{t+1} - u^{t+1} - z^{t+1} \|^2 - \frac{1}{2\gamma}\| x^{t+1} - z^{t+1} \|^2 - \frac{1}{\gamma} \| u^{t+1} - x^{t+1} \|^2\\
&\leq H(x^t, y^{t+1}) + F(u^t) + \frac{1}{2\gamma} \| 2x^{t+1} - u^t - z^t \|^2 + \frac{1}{2\gamma} \| z^t - x^t \|^2 \\
&\quad - \frac{1}{\gamma} \| x^{t+1} - z^t \|^2 - \frac{1}{2}\left(\frac{1}{\gamma} - l \right) \| x^{t+1} - x^t \|^2.
\end{aligned}
\end{equation}
Using the basic equality $\| 2a - b -c \|^2 - 2 \| a - c \|^2 = 2 \| a -b \|^2 - \| b - c \|^2$ twice, we get 
\begin{equation}\label{eq650}
\begin{aligned}
&  \| 2x^{t+1} - u^t - z^t \|^2 +  \| z^t - x^t \|^2  \\
&= (2\| x^{t+1} - u^t \|^2 - \| u^t - z^t \|^2 + 2 \| x^{t+1} - z^t \|^2) + \| z^t - x^t \|^2\\
&= 2 \| x^{t+1} - u^t \|^2 +  2 \| x^{t+1} - z^t \|^2 + \| 2x^t - u^t - z^t \|^2 - \| x^t - z^t \|^2 - 2 \| x^t - u^t \|^2.
\end{aligned}
\end{equation}
Substituting \eqref{eq650} into \eqref{eq264} yields that   
\begin{equation}\label{eq265}
\begin{aligned}
&H(x^{t+1}, y^{t+1}) + F(u^{t+1}) + \frac{1}{2\gamma} \| 2x^{t+1} - u^{t+1} - z^{t+1} \|^2 - \frac{1}{2\gamma}\| x^{t+1} - z^{t+1} \|^2 - \frac{1}{\gamma} \| u^{t+1} - x^{t+1} \|^2\\
&\leq H(x^t, y^{t+1}) + F(u^t) + \frac{1}{2\gamma} \| 2x^{t} - u^{t} - z^{t} \|^2  - \frac{1}{2\gamma} \| x^{t} - z^t \|^2 - \frac{1}{\gamma} \| x^{t} - u^t \|^2  \\
&\quad + \frac{1}{\gamma} \| x^{t+1} - u^t \|^2 - \frac{1}{2}\left(\frac{1}{\gamma} - l \right) \| x^{t+1} - x^t \|^2. \\
\end{aligned}
\end{equation}
Furthermore,  the optimal condition of \eqref{algx} is given as 
\begin{equation}\label{eq13}
0 = \nabla_x H(x^{t+1}, y^{t+1}) + \frac{1}{\gamma} (x^{t+1} - z^t), 
\end{equation}
this implies that 
\begin{equation}\label{eq21}
\frac{1}{\gamma} (z^t - x^{t+1}) + l x^{t+1} = \nabla_x \left[H(\cdot, y^{t+1}) + \frac{l}{2} \| \cdot \|^2 \right] (x^{t+1}).
\end{equation}
Thus we also have 
\begin{equation}\label{eq22}
\frac{1}{\gamma} (z^{t-1} - x^t) + l x^t + \nabla_x H(x^t, y^{t+1}) - \nabla_x H(x^t, y^t) = \nabla_x \left[ H(\cdot, y^{t+1}) + \frac{l}{2} \| \cdot \|^2 \right] (x^t).
\end{equation}
Since the function $H(\cdot, y^{t+1}) + \frac{l}{2} \| \cdot \|^2 $ is a convex function, we get 
\begin{equation}\label{eq23}
\left\langle \frac{1}{\gamma} (z^t - x^{t+1}) + l x^{t+1} - \frac{1}{\gamma} (z^{t-1} - x^t) - l x^t - \left[ \nabla_x H(x^t, y^{t+1}) - \nabla_x H(x^t, y^t) \right], x^{t+1}-x^t \right \rangle \geq 0.
\end{equation}
Using the Cauchy inequality, we get 
\begin{equation}\label{eq24}
\frac{1}{\gamma} \langle z^t - z^{t-1}, x^{t+1} - x^t \rangle + \frac{1}{2} \| \nabla_x H(x^t, y^{t+1}) - \nabla_x H(x^t, y^t) \|^2 + \frac{1}{2}\| x^{t+1} - x^t \|^2 \geq \left( \frac{1}{\gamma}  - l \right) \| x^{t+1} - x^t \|^2, 
\end{equation}
and hence 
\begin{equation}\label{eq25}
\begin{aligned}
\frac{1}{\gamma} \langle z^t - z^{t-1}, x^{t+1} - x^t \rangle &\geq \left(\frac{1}{\gamma} - l - \frac{1}{2} \right) \| x^{t+1} - x^t \|^2 - \frac{1}{2} \| \nabla_x H(x^t, y^{t+1}) - \nabla_x H(x^t, y^t) \|^2\\
&\geq \left( \frac{1}{\gamma} - l - \frac{1}{2} \right) \| x^{t+1} - x^t \|^2 - \frac{L_4^{*}}{2} \| y^{t+1} - y^t \|^2,
\end{aligned}
\end{equation}
where  the Lipschitz condition of $\nabla_x H(x, y)$ is used in the last inequality. This is also equivalent to 
\begin{equation}\label{eq26}
\langle z^t - z^{t-1}, x^{t+1} - x^t \rangle \geq \left( 1 - \gamma l - \frac{\gamma}{2} \right) \| x^{t+1} - x^t \|^2 - \frac{\gamma L_4^{*}}{2} \| y^{t+1} - y^t \|^2.
\end{equation}
Therefore, we can obtain the estimate on $\| x^{t+1} - u^t \|^2$ as follows: 
\begin{equation}\label{eq27}
\begin{aligned}
\| x^{t+1} - u^t \|^2 &= \| x^{t+1} - x^t + x^t - u^t \|^2\\
&=\| x^{t+1} - x^t - (z^t - z^{t-1}) \|^2\\
&=\| x^{t+1} - x^t \|^2 - 2 \langle x^{t+1} - x^t, z^t - z^{t-1} \rangle + \| z^t - z^{t-1} \|^2\\
&\leq (-1 + 2\gamma l + \gamma) \| x^{t+1} - x^t \|^2 + \gamma L_4^{*} \| y^{t+1} - y^t \|^2 + \| z^{t} - z^{t-1} \|^2.
\end{aligned}
\end{equation}
Denote $\Phi(x^t, u^t, z^t; y^{t+1}) := H(x^{t}, y^{t+1}) + \mathcal{M} (x^t, u^t, z^t)$, where 
\begin{equation}\label{eq651}
 \mathcal{M}_t := \mathcal{M} (x^t, u^t, z^t) = F(u^{t}) + \frac{1}{2\gamma} \| 2x^{t} - u^{t} - z^{t} \|^2 - \frac{1}{2\gamma}\| x^{t} - z^{t} \|^2 - \frac{1}{\gamma} \| u^{t} - x^{t} \|^2.
 \end{equation}
 Then substituting \eqref{eq27} into \eqref{eq265}, we have
\begin{equation}\label{eq300}
\begin{aligned}
&\Phi(x^{t+1}, u^{t+1}, z^{t+1}; y^{t+1}) - \Phi(x^t, u^t, z^t; y^{t+1})  \\
& \leq \frac{-3 + 5\gamma l + 2\gamma}{2\gamma} \| x^{t+1} - x^t \|^2 +  L_4^{*} \| y^{t+1} - y^t \|^2 + \frac{1}{\gamma} \| z^t - z^{t-1} \|^2.
\end{aligned}
\end{equation}
Furthermore, according to the optimal condition \eqref{eq13} we have
\begin{equation}\label{eq15}
0 = \nabla_x H(x^{t+1}, y^{t+1}) - \nabla_x H(x^t, y^t) + \frac{1}{\gamma} (x^{t+1} - z^t) - \frac{1}{\gamma} (x^t - z^{t-1}).
\end{equation}
Rewrite \eqref{eq15} as 
\begin{equation}\label{eq31}
\frac{1}{\gamma} (z^t - z^{t-1}) = \nabla_x H(x^{t+1}, y^{t+1}) - \nabla_x H(x^t, y^t) + \frac{1}{\gamma} (x^{t+1} - x^t).
\end{equation}
Using the Lipschitz continuity of $\nabla_x H(x, y)$ in Assumption \ref{ass2}, we have
\begin{equation}\label{eq34}
\| z^{t} - z^{t-1} \| \leq (1 + \gamma L_2^{*}) \| x^{t+1} - x^t \| +  \gamma L_4^{*} \| y^{t+1} - y^t \|. 
\end{equation}
This together with the Cauchy inequality gives 
\begin{equation}\label{eq35}
\begin{aligned}
 \| z^t - z^{t-1} \|^2 &\leq \frac{5}{4} \left(1+\gamma L_2^* \right)^2 \| x^{t+1} - x^t \|^2 + 5 (\gamma L_4^*)^2 \| y^{t+1} - y^t \|^2.\\
\end{aligned}
\end{equation}
Combining this with \eqref{eq300}, we have
\begin{equation}\label{eq36}
\begin{aligned}
&\Phi(x^{t+1}, u^{t+1}, z^{t+1}; y^{t+1}) - \Phi(x^t, u^t, z^t; y^{t+1})  \\
&\leq \left[ \frac{-3 + 5\gamma l + 2\gamma}{2\gamma} + \frac{5(1 + \gamma L_2^*)^2}{4\gamma} \right] \| x^{t+1} - x^t \|^2 +  L_4^* ( 1 + 5\gamma L_4^* ) \| y^{t+1} - y^t \|^2\\
&=: - \mathcal{K}_1 \| x^{t+1} - x^t \|^2 +  L_4^* ( 1 + 5\gamma L_4^* ) \| y^{t+1} - y^t \|^2,
\end{aligned}
\end{equation}
where $\mathcal{K}_1 = - \left[ \frac{-3 + 5\gamma l + 2\gamma}{2\gamma} + \frac{5 (1 + \gamma L_2^*)^2}{4 \gamma} \right] $. It is clear that $\mathcal{K}_1 > 0$ when $\gamma$ is less than a computable thresholding. Thus we have
\begin{equation}\label{eq38}
\mathcal{K}_1 \| x^{t+1} - x^t \|^2 \leq \Phi(x^{t}, u^{t}, z^{t}; y^{t+1}) - \Phi(x^{t+1}, u^{t+1}, z^{t+1}; y^{t+1})  \\+ L_4^* ( 1 + 5\gamma L_4^* ) \| y^{t+1} - y^t \|^2.
\end{equation}
Denote $\mathcal{K}_2 := \frac{5(1 + \gamma L_2^*)^2}{4\gamma^2}$. From \eqref{eq35} and \eqref{eq38}, we have
\begin{equation}\label{eq39}
\begin{aligned}
&\frac{\mathcal{K}_1}{\mathcal{K}_2 } \left( \frac{1}{\gamma} \| z^t - z^{t-1} \| \right)^2 \\
&\leq \mathcal{K}_1 \| x^{t+1} - x^t \|^2 + \frac{5\mathcal{K}_1 (L_4^*)^2 }{\mathcal{K}_2 } \| y^{t+1} - y^t \|^2,\\
&\leq \Phi(x^{t}, u^{t}, z^{t}; y^{t+1}) - \Phi(x^{t+1}, u^{t+1}, z^{t+1}; y^{t+1}) + \left(  L_4^* ( 1 + 5\gamma L_4^* ) + \frac{5 \mathcal{K}_1 (L_4^*)^2}{\mathcal{K}_2} \right) \| y^{t+1} - y^t \|^2\\
&=: \Phi(x^{t}, u^{t}, z^{t}; y^{t+1}) - \Phi(x^{t+1}, u^{t+1}, z^{t+1}; y^{t+1}) + \mathcal{K}_3 \| y^{t+1} - y^t \|^2,
\end{aligned}
\end{equation}
where $\mathcal{K}_3 = \left(  L_4^* ( 1 + 5\gamma L_4^* ) + \frac{5 \mathcal{K}_1 (L_4^*)^2}{\mathcal{K}_2} \right)$. By the definition of $\Phi(x^{t}, u^{t}, z^{t}; y^{t+1})$, we have
\begin{equation}\label{eq43}
\begin{aligned}
&\frac{\mathcal{K}_1}{\mathcal{K}_2}\left( \frac{1}{\gamma} \| z^t - z^{t-1} \| \right)^2 \leq H(x^t, y^{t+1}) - H(x^{t+1}, y^{t+1}) + \mathcal{M}_t - \mathcal{M}_{t+1} + \mathcal{K}_3 \| y^{t+1} - y^t \|^2.
\end{aligned}
\end{equation}
Taking the expectation on both side, we get
\begin{equation}\label{eq44}
\begin{aligned}
& \mathbb{E} \left(\frac{\| z^t - z^{t-1} \|}{\gamma}\right)^2 \\
& \leq \frac{\mathcal{K}_2}{ \mathcal{K}_1} \bigg( \mathbb{E} \left[ H(x^t, y^{t+1}) - H(x^{t+1}, y^{t+1}) \right]  + \mathbb{E} \left[ \mathcal{M}_t - \mathcal{M}_{t+1} \right] + \mathcal{K}_3 \mathbb{E} \| y^{t+1} - y^t \|^2 \bigg).
\end{aligned}
\end{equation}
\end{proof}

Finally, combined Lemma \ref{par1} and Lemma \ref{par2},  we give the  proof of  Lemma \ref{conv-lemma}. \\

\noindent{\bf{Proof of Lemma \ref{conv-lemma}.}} Summing \eqref{eq200} and \eqref{eq44}, we have
\begin{equation}\label{eq45}
\begin{aligned}
&\frac{1}{\beta} \eta_t + \frac{M}{2} \mathbb{E} \| y^{t+1} - y^t \|^2 + \frac{\mathcal{K}_1}{\mathcal{K}_2} \mathbb{E}\left(  \frac{\| z^t - z^{t-1} \|}{\gamma} \right)^2\\
&\leq \left( 1 + \frac{(L+M)A}{\beta^2}\right) \delta_t - \delta_{t+1} + \frac{(L+M)C}{2\beta^2} + \mathbb{E} \left[ H(x^{t+1}, y^{t+1}) - H(x^t, y^{t+1})\right]\\
&\quad ~ + \mathbb{E} \left[ H(x^t, y^{t+1}) - H(x^{t+1}, y^{t+1}) \right] + \mathcal{K}_3 \mathbb{E} \| y^{t+1} - y^t \|^2  + \mathbb{E} \left[ \mathcal{M}_{t} - \mathcal{M}_{t+1}\right].
\end{aligned}
\end{equation}
Taking $M = 2\mathcal{K}_3$, we get
\begin{equation}\label{eq46}
\begin{aligned}
&\frac{1}{\beta} \eta_t + \frac{\mathcal{K}_1}{\mathcal{K}_2} \mathbb{E}\left(  \frac{\| z^t - z^{t-1} \| }{\gamma} \right)^2\\
&\leq \left( 1 + \frac{(L+M)A}{\beta^2}\right) \delta_t - \delta_{t+1} + \frac{(L+M)C}{2\beta^2}+ \mathbb{E} \left[ \mathcal{M}_{t} - \mathcal{M}_{t+1}\right]\\
&= \left( 1 + \frac{(L+M)A}{\beta^2}\right) \delta_t - \delta_{t+1} + \frac{(L+M)C}{2\beta^2}  + \mathbb{E} \left[ \mathcal{M}_{t}- \inf_{t \geq 0}\mathcal{M}_t \right] - \mathbb{E} \left[ \mathcal{M}_{t+1} - \inf_{t \geq 0} \mathcal{M}_t \right]\\
&= \left( 1 + \frac{(L+M)A}{\beta^2}\right) \delta_t - \delta_{t+1} + \mathcal{M}_{t}^{\prime} - \mathcal{M}_{t+1}^{\prime} + \frac{(L+M)C}{2\beta^2},
\end{aligned}
\end{equation}
where $\mathcal{M}_{t}^{\prime}  = \mathbb{E} \left[ \mathcal{M}_{t} - \inf_{t \geq 0} \mathcal{M}_t \right]$. For $\omega_{-1} > 0$, define $\omega_t = \frac{\omega_{t-1}}{\left(1 + \frac{(L+M)A}{\beta^2} \right)}$. Clearly that $\{ \omega_t \}_{t \geq -1} $ is a decreasing and positive sequence. Multiplying $\beta \omega_t$ on the both sides, we get
\begin{equation}\label{eq47}
\begin{aligned}
& \omega_t \eta_t + \frac{\beta\mathcal{K}_1}{\mathcal{K}_2} \omega_t\mathbb{E}\left(  \frac{\| z^t - z^{t-1} \|}{\gamma} \right)^2\\
&\leq \beta \left(1 + \frac{(L+M)A}{\beta^2} \right) \omega_t \delta_t  - \beta \omega_t \delta_{t+1}  + \frac{(L+M)C}{2\beta}\omega_t + \beta \omega_{t-1} \mathcal{M}_{t}^{\prime} - \beta \omega_t \mathcal{M}_{t+1}^{\prime} \\
&\leq \beta\omega_{t-1} \delta_t  - \beta \omega_t \delta_{t+1} + \frac{(L+M)C}{2\beta}\omega_t + \beta \omega_{t-1} \mathcal{M}_{t}^{\prime} - \beta \omega_t \mathcal{M}_{t+1}^{\prime}. \\
\end{aligned}
\end{equation}
Summing up both sides from  $t = 0$ to $t=T-1$ we have
\begin{equation}\label{eq48}
\begin{aligned}
&\sum_{t= 0}^{T-1} \omega_t \eta_t + \frac{\beta \mathcal{K}_1}{\mathcal{K}_2} \sum_{t=0}^{T-1} \omega_t \mathbb{E}\left(  \frac{\| z^t - z^{t-1}\|}{\gamma} \right)^2\\
&\leq \beta \omega_{-1} \delta_0 - \beta \omega_{T-1} \delta_{T} + \frac{(L+M)C}{2\beta} \sum_{t=0}^{T-1} \omega_t  + \beta \omega_{-1} \mathcal{M}_{0}^{\prime} - \beta \omega_{T-1} \mathcal{M}_{T}^{\prime} \\
&\leq \beta \omega_{-1} \delta_0 + \beta \omega_{-1} \mathcal{M}_{0}^{\prime}  + \frac{(L+M)C}{2\beta} \sum_{t=0}^{T-1} \omega_t.  \\
\end{aligned}
\end{equation}
This completes the proof of Lemma \ref{conv-lemma}.
\hfill\BlackBox \\

\section{Proofs of Theorems \ref{conv} and \ref{conv-rate}}\label{P=MT0}\label{app2}

Based on Lemma \ref{conv-lemma} and Algorithm \ref{alg:STAM}, we prove Theorem \ref{conv}. \\

\noindent{\bf Proof of Theorem \ref{conv}.}
We will make use of Lemma \ref{conv-lemma} to complete the proof.  Define $W_T = \sum_{t=0}^{T-1} \omega_t$. Dividing $W_T$ on the both sides of \eqref{ineq1}, we get
\begin{equation}\label{eq49}
\begin{aligned}
& \min_{0 \leq t \leq T-1} \eta_t + \frac{\beta \mathcal{K}_1}{\mathcal{K}_2}  \min_{0 \leq t \leq T-1} \mathbb{E} \left( \frac{\| z^t - z^{t-1} \|}{\gamma}\right)^2\\
&\leq \frac{1}{W_T} \bigg( \sum_{t=0}^{T-1} \omega_t \eta_t + \frac{\beta \mathcal{K}_1}{\mathcal{K}_2 } \sum_{t=0}^{T-1} \omega_t \mathbb{E} \left( \frac{\| z^t - z^{t-1} \|}{\gamma}\right)^2 \bigg)\\
&\leq \frac{\omega_{-1}}{W_T} \beta \delta_0 + \frac{\omega_{-1}}{W_T} \beta \mathcal{M}_{0}^{\prime} + \frac{(L+M)C}{2\beta}.
\end{aligned}
\end{equation}
It is easy to see that 
\begin{equation}\label{eq50}
W_T = \sum_{t=0}^{T-1} \omega_t \geq \sum_{t=0}^{T-1} \min_{0 \leq i \leq T-1} \omega_i = T \omega_{T-1} = \frac{T \omega_{-1}}{\left( 1 + \frac{(L+M)A}{\beta^2}\right)^{T}}.
\end{equation}
 Using this in \eqref{eq49} and the fact that  $\frac{\beta\mathcal{K}_1}{\mathcal{K}_2} \geq 2$, we have
\begin{equation}\label{eq51}
\begin{aligned}
& \min_{0 \leq t \leq T-1} \left[\eta_t + 2\mathbb{E} \left( \frac{\| z^t - z^{t-1} \|}{\gamma}\right)^2 \right]\\
&\leq \frac{\left(1 + \frac{(L+M)A}{\beta^2} \right)^T}{T} \big( \beta \delta_0 +  \beta \mathcal{M}_{0}^{\prime} \big) + \frac{(L+M)C}{2\beta},
\end{aligned}
\end{equation}
where $\eta_t = \mathbb{E} \| \nabla G(y^t) + \nabla_y H(x^t, y^t) \|^2$. By the Lipschitz continuity of $\nabla_y H(\cdot, y^t)$ and Cauchy inequality, we have 
\begin{equation}\label{eq400}
\| \nabla G(y^t) + \nabla_y H(u^t, y^t) \|^2 \leq 2  \| \nabla G(y^t) + \nabla_y H(x^t, y^t) \|^2 + 2(L_5^*)^2 \| z^t - z^{t-1} \|^2.
\end{equation}
On the other hand, note that the optimal condition of the subproblem \eqref{algu} is
\begin{equation}\label{eq16}
0 \in \partial F(u^t) + \frac{1}{\gamma} (u^t - 2x^t + z^{t-1}).
\end{equation}
Combined this with \eqref{eq13}  and \eqref{algz}, we obtain 
\begin{equation}\label{eq17}
\frac{1}{\gamma} (z^{t-1} - z^t) \in \nabla_x H(x^t, y^t) + \partial F(u^t). 
\end{equation}
Hence,
\begin{equation}\label{eq401}
\frac{1}{\gamma} (z^{t-1} - z^t) + \bigg( \nabla_x H(u^t, y^t) - \nabla_x H(x^t, y^t) \bigg) \in \nabla_x H(u^t, y^t) + \partial F(u^t).
\end{equation}
This implies that
\begin{equation}\label{eq402}
\textmd{dist}^2 (0, \nabla_x H(u^t, y^t) + \partial F(u^t))  \leq 2 \left( \frac{\| z^t - z^{t-1} \| }{\gamma}\right)^2 + 2 (L_2^*)^2 \| z^t - z^{t-1} \|^2.
\end{equation}
Combined this with \eqref{eq400}, we get
\begin{equation}\label{eq403}
\begin{aligned}
&\mathbb{E} \| \nabla G(y^t) + \nabla_y H(u^t, y^t) \|^2 + \mathbb{E}  \textmd{dist}^2 (0, \nabla_x H(u^t, y^t) + \partial F(u^t) ) \\
&\leq 2 \eta_t + 2 \mathbb{E} \left( \frac{\| z^t - z^{t-1} \|^2}{\gamma}\right)^2 + 2 \big(  (L_2^*)^2 + (L_5^*)^2 \big) \mathbb{E} \| z^t - z^{t-1} \|^2\\
&\leq 2 \eta_t + 4 \mathbb{E} \left(\frac{\| z^t - z^{t-1} \| }{\gamma} \right)^2
\end{aligned}
\end{equation}
due to $(L_2^*)^2 + (L_5^*)^2 \leq \frac{1}{\gamma^2}$. Thus, it follows from \eqref{eq51} that
\begin{equation}\label{eq52}
\begin{aligned}
&\min_{0 \leq t \leq T-1} \bigg[ \mathbb{E} \| \nabla G(y^t) + \nabla_y H(u^t, y^t) \|^2 + \mathbb{E}~\textmd{dist} \big( 0,  \nabla_x H(u^t, y^t) + \partial F(u^t) \big) \bigg] \\
&\leq 2 \min_{0 \leq t \leq T-1} \left[ \eta_t + 2 \mathbb{E} \left(\frac{\| z^t - z^{t-1} \| }{\gamma} \right)^2 \right]\\
&\leq 2 \frac{\left( 1+ \frac{(L+M)A}{\beta^2} \right)^{T}}{T} \beta ( \delta_0 + \mathcal{M}_{0}^{\prime}) + \frac{(L+M)C}{\beta}. 
\end{aligned}
\end{equation}
The proof of Theorem \ref{conv} is finished.  \hfill\BlackBox \\

As a consequence of Theorem \ref{conv}, we can easily obtain Theorem \ref{conv-rate}.\\

\noindent{\bf Proof of Theorem \ref{conv-rate}.} According to the fact that $1 + x \leq e^x (x \geq 0)$ and choosing $\beta > 0$ such that $\beta > \sqrt{(L+M)AT}$, we have
\begin{equation}\label{eq700}
\left(1 + \frac{(L+M)A}{\beta^2} \right)^T \leq \textmd{exp} \left(\frac{(L+M)AT}{\beta^2} \right) \leq  \textmd{exp}(1) \leq 3.
\end{equation}
It follows from  Theorem \ref{conv} that \eqref{eq52} holds if $\beta \geq \frac{2\mathcal{K}_2}{\mathcal{K}_1}$.  Thus, by  \eqref{eq700} and \eqref{eq52}  we obtain
\begin{equation}\label{eq701}
\begin{aligned}
&\min_{0 \leq t \leq T-1} \bigg[ \mathbb{E} \| \nabla G(y^t) + \nabla_y H(u^t, y^t) \|^2 + \mathbb{E}~\textmd{dist}^2 \big( 0,  \nabla_x H(u^t, y^t) + \partial F(u^t) \big) \bigg] \\
&\leq \frac{(L+M)C}{\beta} +  \frac{6\beta}{T} \left( \delta_0  +  \mathcal{M}_{0}^{\prime} \right).
\end{aligned}
\end{equation}
To make the RHS of \eqref{eq701} less than $\epsilon^2$, we could require that $\frac{(L+M)C}{\beta} \leq \frac{\epsilon^2}{2}$ and $\frac{6\beta}{T} \left( \delta_0  +  \mathcal{M}_{0}^{\prime} \right) \leq \frac{\epsilon^2}{2}$. Then we have
\begin{equation}\label{eq702}
 \frac{(L+M)C}{\beta} \leq \frac{\epsilon^2}{2} \Rightarrow  \beta \geq \frac{2C(L+M)}{\epsilon^2},
\end{equation}
and
\begin{equation}\label{eq703}
\frac{6\beta}{T} \left( \delta_0  +  \mathcal{M}_{0}^{\prime} \right) \leq \frac{\epsilon^2}{2} \Rightarrow T \geq \frac{12\beta (\delta_0 + M_0^{\prime})}{\epsilon^2}.
\end{equation}
Substituting three requirements on  $\beta$ into \eqref{eq703} yields that 
\begin{equation}\label{eq704}
T \geq \frac{12\sqrt{(L+M)AT}(\delta_0 + M_0^{\prime})}{\epsilon^2},~~T \geq \frac{24 \mathcal{K}_2 (\delta_0 + M_0^{\prime})}{\mathcal{K}_1\epsilon^2}~~~\textmd{and}~~~T\geq \frac{24C(L+M)(\delta_0 + M_0^{\prime})}{\epsilon^4}.
\end{equation}
By simplifying the form \eqref{eq704}, we get 
\begin{equation}\label{eq705}
T \geq \frac{12(L+M)(\delta_0 + M_0^{\prime})}{\epsilon^2} \max \Big\{ \frac{2C}{\epsilon^2}, \frac{12(\delta_0+ M_0^{\prime})A}{\epsilon^2}, \frac{2\mathcal{K}_2}{\mathcal{K}_1(L+M)} \Big\}.
\end{equation}
Thus, the desired conclusion is showed. 
\hfill\BlackBox \\
\end{appendices}


\end{document}